\newtheorem{thm}{Theorem}[section]
\newtheorem{defn}[thm]{Definition}
\newtheorem{cor}{Corollary}
\newtheorem{rem}{Remark}[section]
\newcommand{\Exp}{\mathbb{E}}
\newcommand{\R}{\mathcal{R}}
\newcommand{\cG}{\mathcal{G}} 
\newcommand{\cV}{\mathcal{V}} 
\newcommand{\cE}{\mathcal{E}} 
\newcommand{\cL}{\mathcal{L}} 
\newcommand{\cS}{\mathcal{S}} 
\newcommand{\bA}{\mathbf{A}}
\newcommand{\bB}{\mathbf{B}}
\newcommand{\bH}{\mathbf{H}}
\newcommand{\bI}{\mathbf{I}}
\newcommand{\bS}{\mathbf{S}}
\newcommand{\bW}{\mathbf{W}}
\newcommand{\eqdef}{:=}
\newcommand{\cD}{{\cal D}}
\newcommand{\mA}{{\bf A}}
\newcommand{\mB}{{\bf B}}
\newcommand{\mH}{{\bf H}}
\newcommand{\mL}{{\bf L}}
\newcommand{\mS}{{\bf S}}
\title{\LARGE \bf
Accelerated Gossip via Stochastic Heavy Ball Method\thanks{Accepted for publication to 56th Annual Allerton Conference on Communication, Control, and Computing. This work appeared first time online on 9th July 2018.}
}
\author{ \parbox{3 in}{\centering Nicolas Loizou\\ 
         School of Mathematics,\\ The University of Edinburgh\\
        Edinburgh, Scotland, UK\\
         {\tt\small n.loizou@sms.ed.ac.uk}}
         \hspace*{ 0.5 in}
         \parbox{3 in}{ \centering Peter Richt\'{a}rik\\
        KAUST, Saudi Arabia \\
        The University of Edinburgh, United Kingdom\\
        MIPT, Russia\\
         {\tt\small peter.richtarik@kaust.edu.sa}}
}
\date{}
\begin{document}

\maketitle
\thispagestyle{empty}
\pagestyle{empty}

\begin{abstract}
In this paper we show how the stochastic heavy ball method (SHB)---a  popular method for solving stochastic convex and non-convex optimization problems---operates as a randomized gossip algorithm. 
 In particular, we focus on two special cases of SHB: the Randomized Kaczmarz method with momentum and its block variant. Building upon a recent framework  for the design and analysis of randomized gossip algorithms  \cite{LoizouRichtarik} we interpret the distributed nature of the proposed methods. We present novel protocols for solving the average consensus problem where in each step all nodes of the network update their values but only a subset of them exchange their private values. Numerical experiments on popular wireless sensor networks showing the benefits of our protocols are also presented.
\end{abstract}
\begin{keywords}
Average Consensus Problem, Linear Systems, Networks, Randomized Gossip Algorithms, Randomized Kaczmarz, Momentum, Acceleration
\end{keywords}

\section{Introduction}

Average consensus is a fundamental problem in distributed computing and multi-agent systems. It comes up in many real world applications such as coordination of autonomous agents, estimation, rumour spreading in social networks, PageRank and distributed data fusion on ad-hoc networks and decentralized optimization. Due to its great importance there is much classical \cite{tsitsiklis1986distributed,degroot1974reaching} and recent   \cite{xiao2005scheme, xiao2004fast, boyd2006randomized} work on the design of efficient algorithms/protocols for solving it.

One of the most attractive classes of protocols for solving the average consensus are gossip algorithms. The development and design of gossip algorithms was studied extensively in the last decade. The seminal 2006 paper of Boyd et al.\ \cite{boyd2006randomized} on randomized gossip algorithms motivated  a fury of subsequent research and now gossip algorithms appear in many applications, including distributed data fusion in sensor networks \cite{xiao2005scheme}, load balancing \cite{cybenko1989dynamic} and clock synchronization \cite{freris2012fast}.  For a survey of selected relevant work prior to 2010, we refer the reader to the work of  Dimakis et al.\ \cite{dimakis2010gossip}. For more recent results on randomized gossip algorithms we suggest \cite{zouzias2015randomized, liu2013analysis,olshevsky2014linear,LoizouRichtarik, nedic2018network, aybat2017decentralized}. 
See also  \cite{dimakis2008geographic, aysal2009broadcast, olshevsky2009convergence,hanzely2017privacy}. 

The main goal in the design of gossip protocols is for the computation and communication to be done as quickly and efficiently as possible. In this work, our focus is precisely this. We design randomized gossip protocols which converge to consensus fast.

\subsection{The average consensus problem} \label{sec:ACP}

In the average consensus (AC) problem we are given an undirected connected network $\cG=(\cV,\cE)$ with node set $\cV=\{1,2,\dots,n\}$ and edges $\cE$. Each node $i \in \cV$ ``knows'' a private value $c_i \in \R$. The goal of AC is for every node to compute the average of these private values, $\bar{c}\eqdef\tfrac{1}{n}\sum_i c_i$, in a distributed fashion. That is, the exchange of information can only occur between connected nodes (neighbors). 

\subsection{Main Contributions} 

We present a new class of randomized gossip protocols where in each iteration all nodes of the network update their values  but only a subset of them exchange their private information. Our protocols  are based on recently proposed ideas for the acceleration of randomized Kaczmarz methods for solving consistent linear systems \cite{loizou2017momentum} where the addition of a momentum term was shown to provide practical speedups over the vanilla Kaczmarz methods.  Further, we explain the connection between  gossip algorithms for solving the average consensus problem, Kaczmarz-type methods for solving consistent linear systems, and stochastic gradient descent and stochastic heavy ball methods for solving stochastic optimization problems. We show that essentially all these algorithms behave as gossip algorithms.  Finally, we explain in detail the gossip nature of two recently proposed fast Kacmzarz-type methods:  the randomized Kacmzarz with momentum (mRK), and its block variant, the randomized block Kaczmarz with momentum (mRBK). We present a detailed comparison of our proposed gossip protocols with existing popular randomized gossip protocols and through numerical experiments we show the benefits of our methods.

\subsection{Structure of the paper}

This work is organized as follows. Section~\ref{background} introduces the important technical preliminaries and the necessary background for understanding of our methods. A new connection between gossip algorithms, Kaczmarz methods for solving linear systems and stochastic gradient descent (SGD) for solving stochastic optimization problems is also described.  In Section~\ref{theProtocols} the two new accelerated gossip protocols are presented. Details of  their behaviour and performance are also explained. Numerical evaluation of the new gossip protocols is presented in Section~\ref{experiments}. 
Finally, concluding remarks are given in Section~\ref{conclusion}.

\subsection{Notation}
The following notational conventions are used in this paper. We write $[n]\eqdef \{1,2, \dots ,n\}$.  Boldface upper-case letters denote matrices; $\bI$ is the identity matrix. 
By $\cL$ we denote the solution set of the linear system $\bA x=b$, where $\bA \in \R^{m\times n}$ and $b\in \R^m$. Throughout the paper, $x^*$ is the projection of $x^0$ onto $\cL$ (that is, $x^*$  is the solution of the best approximation problem; see equation \eqref{best approximation}). 
An explicit formula for the projection of $x$ onto set $\cL$ is given by
\begin{equation*}
\Pi_{\cL}(x)\eqdef \arg\min_{x' \in \cL} \|x'-x\| =x-\bA^\top (\bA\bA ^ \top )^\dagger (\bA x-b).
\end{equation*}
A matrix that  often appears in our update rules is 
\begin{equation}
\label{MatrixH}
\mH \eqdef  \mS (\mS^\top \mA\mA^\top \mS)^\dagger \mS^\top,
\end{equation}
where $\mS \in \R^{m \times q}$ is a  random matrix drawn in each step of the proposed methods from a given distribution $\cD$, and   $\dagger$ denotes the Moore-Penrose pseudoinverse.  Note that 
 $\mH$ is a random symmetric positive semi-definite matrix.
 
 In the convergence analysis we use $\lambda_{\min}^+$ to indicate the smallest nonzero eigenvalue, and $\lambda_{\max}$ for the largest eigenvalue of matrix $\bW=\Exp[\bA^\top \bH \bA]$, where the expectation is taken over $\bS\sim \cD$.   Finally, $x^k = (x^k_1,\dots,x^k_n) \in \R^n$ represents the vector with the private values of the $n$ nodes of the network at the $k^{th}$ iteration while with $x_i^{k}$ we denote the value of node $i \in [n]$ at the $k^{th}$ iteration.

\section{Background-Technical Preliminaries}
\label{background}
Our work is closely related to two recent papers. In \cite{LoizouRichtarik},  a new perspective on randomized gossip algorithms is presented. In particular, a new approach for the design and analysis of randomized gossip algorithms is proposed and it was shown how the Randomized Kaczmarz and Randomized Block Kaczmarz, popular methods for solving linear systems, work as gossip algorithms when applied to a special system encoding the underlying network. In \cite{loizou2017momentum}, several classes of stochastic optimization algorithms enriched with  {\em heavy ball momentum} were analyzed. Among the methods studied are: stochastic gradient descent, stochastic Newton, stochastic proximal point  and stochastic dual subspace ascent.

In the rest of this section we present the main results of the above papers, highlighting several connections. These results will be later used for the development of the new randomized gossip protocols.


\subsection{Kaczmarz Methods and Gossip Algorithms}

Kaczmarz-type methods are very popular  for solving linear systems $\bA x =b$ with many equations. The (deterministic) Kaczmarz method for solving consistent linear systems was originally introduced by Kaczmarz in 1937 \cite{1937angenaherte}. Despite the fact that a large volume of papers  was written  on the topic, the first provably linearly convergent variant of the Kaczmarz method---the randomized Kaczmarz Method (RK)---was developed more than 70 years later, by Strohmer and Vershynin \cite{RK}. This result sparked renewed interest in design of randomized methods for solving linear systems \cite{needell2010randomized, RBK, eldar2011acceleration, MaConvergence15, zouzias2013randomized, l2015randomized, schopfer2016linear, liu2016accelerated}. More recently, Gower and Richt\'{a}rik \cite{gower2015randomized} provide a unified analysis for several randomized iterative methods for solving linear systems using a sketch-and-project framework. We adopt this framework in this paper. 

In particular, the sketch-and-project algorithm  \cite{gower2015randomized} for solving the consistent linear system $\bA x= b$ has the form
\begin{eqnarray}
\label{sketchproject}
x^{k+1} &=& x^k -\bA^\top \bS_k (\bS_k^\top \bA \bA^\top \bS_k)^\dagger \bS_k^\top (\bA x^k-b) \notag\\ 
&\overset{\eqref{MatrixH}}{=}& x^k -\bA^\top \bH_k (\bA x^k-b),
\end{eqnarray}
where in each iteration matrix $\mS_k$ is sampled afresh from an arbitrary distribution $\cD$. In \cite{gower2015randomized} it was shown that many popular algorithms for solving linear systems, including RK method and randomized coordinate descent method can be cast as special cases of the above update by choosing\footnote{In order to recover a randomized coordinate descent method, one also needs to perform projections with respect to a more general Euclidean norm. However, for simplicity, in this work we only consider the standard Euclidean norm.} an appropriate distribution $\cD$. The special cases that we are interested in are the
randomized Kaczmarz (RK) and its block variant, the randomized block Kaczmarz (RBK).

Let  $e_i \in \R^m$  be the $i^{\text{th}}$ unit coordinate vector in $ \R^m$ and let $\bI_{:C}$ be column submatrix of the $m \times m$ identity matrix with columns indexed by  $C\subseteq [m]$.  Then RK and RBK methods can be obtained as special cases of the  update rule \eqref{sketchproject} as follows:
\begin{itemize}
\item RK: Let $\bS_k=e_i$, where $i=i_k$ is chosen in each iteration independently, with probability $p_i>0$. In this setup the update rule \eqref{sketchproject} simplifies to  
\begin{equation}
\label{RK}
x^{k+1}=x^k - \tfrac{\bA_{i :} x^k -b_{i}}{\|\bA_{i :}\|_2^2} \bA_{i :}^ \top  .
\end{equation}
\item RBK: Let $\bS=\bI_{:C}$, where $C=C_k$ is chosen in each iteration independently, with probability $p_C\geq 0$. In this setup the update rule \eqref{sketchproject} simplifies to 
\begin{equation}
\label{RBK}
x^{k+1}=x^k - \bA_{C:}^\top (\bA_{C:}\bA_{C:}^\top)^\dagger (\bA_{C:}x^k-b_C).
\end{equation}
\end{itemize}

In this paper we are interested in two particular extension of the above methods: the randomized Kaczmarz method with momentum (mRK) and its block variant, the randomized block Kaczmarz with momentum (mRBK), both proposed and analyzed in \cite{loizou2017momentum}. Before we describe these two algorithms, let us summarize the main connections between the  Kaczmarz methods for solving linear systems and gossip algorithms, as presented in \cite{LoizouRichtarik}.

In \cite{gower2015stochastic, ASDA, loizou2017momentum}, it was shown that even in the case of consistent linear systems with {\em multiple} solutions, Kaczmarz-type methods converge linearly to one particular solution: the projection of the initial iterate $x^0$ onto the solution set of the linear system. This naturally leads to the formulation of the {\em best approximation problem}:
\begin{equation}
\label{best approximation}
\min_{x = (x_1,\dots, x_n) \in \R^n} \tfrac{1}{2} \|x-x^0\|^2 
\quad \text{subject to}  \quad \bA x = b.
\end{equation}
Above, $\bA\in \R^{m\times n}$ and $\|\cdot\|$ is the standard Euclidean norm.  By $x^*=\Pi_{\cL}(x^0)$ we denote the  solution of \eqref{best approximation}.

In \cite{LoizouRichtarik} it was shown how RK and RBK work as gossip algorithms when applied to a special linear system encoding the underlying network.
\begin{defn}[\cite{LoizouRichtarik}] A linear system  $\bA x = b$ is called ``average consensus (AC) system'' when  $\bA x = b$ is equivalent to saying that $x_i = x_j$ for all $(i,j) \in \cE$.
\end{defn}

Note that many linear systems satisfy the above definition. For example, we can choose $b=0$ and $\bA \in \R^{|\cE| \times n}$ to be the incidence matrix of  $\cG$. In this case, the row of the system  corresponding to edge $(i,j)$ directly encodes the constraint $x_i=x_j$. 
A different choice is to pick $b=0$ and  $\mA =\mL$, where $\mL$ is the Laplacian of $\cG$. Note that depending on what AC system is used, RK and RBK have different interpretations as gossip protocols. 

From now on  we work with the AC system described in the first example. Since $b=0$, the general sketch-and-project update rule \eqref{sketchproject} simplifies to:
\begin{equation}
\label{updateSkProj}
x^{k+1}= \left[ \bI- \bA^\top \bH_k\bA \right] x^k.
\end{equation}

The convergence performance of RK  and RBK for solving the best approximation problem (and as a result the average consensus problem) is described by the following theorem.
\begin{thm}[\cite{gower2015randomized,gower2015stochastic}]
 Let $\{x^k\}$ be the iterates produced by \eqref{sketchproject}. Then
$\Exp[\|x^k-x^*\|^2]\leq \rho^k \|x^0-x^*\|^2,$
where $x^*$ is the solution of \eqref{best approximation},
$\rho \eqdef 1 - \lambda_{\min}^+ \in [0,1]$, and 
$\lambda_{\min}^+$ denotes the minimum nonzero eigenvalue of $\bW\eqdef \Exp[\bA^\top \bH \bA]$. 
\end{thm}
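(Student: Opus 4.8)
The plan is to track the error $x^k - x^*$ and show it contracts in expectation by the factor $\rho$ at every step. First I would exploit consistency of the AC system: since $x^* \in \cL$ means $\bA x^* = b$, the residual rewrites as $\bA x^k - b = \bA(x^k - x^*)$, so subtracting $x^*$ from both sides of \eqref{sketchproject} yields the clean error recursion
$$x^{k+1} - x^* = \left(\bI - \bA^\top \bH_k \bA\right)(x^k - x^*).$$
Writing $\bZ_k \eqdef \bA^\top \bH_k \bA$, the heart of the argument is the observation that $\bZ_k$ is an orthogonal projection. Substituting \eqref{MatrixH} gives $\bZ_k = (\bA^\top \bS_k)(\bS_k^\top \bA \bA^\top \bS_k)^\dagger (\bA^\top \bS_k)^\top$, which is precisely the orthogonal projector onto $\mathrm{Range}(\bA^\top \bS_k)$ and hence is symmetric and idempotent; consequently $\bI - \bZ_k$ is an orthogonal projection as well.

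Next I would expand the squared norm. Using $(\bI - \bZ_k)^\top(\bI - \bZ_k) = \bI - \bZ_k$, the recursion gives
$$\|x^{k+1} - x^*\|^2 = \|x^k - x^*\|^2 - (x^k - x^*)^\top \bZ_k (x^k - x^*).$$
Taking the expectation conditioned on $x^k$ over the fresh draw $\bS_k \sim \cD$ and using $\Exp[\bZ_k] = \bW$ produces
$$\Exp_k\!\left[\|x^{k+1} - x^*\|^2\right] = \|x^k - x^*\|^2 - (x^k - x^*)^\top \bW (x^k - x^*).$$

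The main obstacle is that $\bW$ is only positive \emph{semi}-definite, so the naive bound $(x^k-x^*)^\top \bW (x^k-x^*) \geq \lambda_{\min}(\bW)\|x^k - x^*\|^2$ is vacuous because $\lambda_{\min}(\bW)=0$ in general. To overcome this I would show the error lives in $\mathrm{Range}(\bA^\top)$, the orthogonal complement of $\mathrm{Null}(\bA) = \mathrm{Null}(\bW)$. Each update in \eqref{sketchproject} adds a vector of $\mathrm{Range}(\bA^\top)$, so $x^k - x^0 \in \mathrm{Range}(\bA^\top)$; moreover, since $x^* = \Pi_{\cL}(x^0)$ is the orthogonal projection onto the affine solution set (a translate of $\mathrm{Null}(\bA)$), the vector $x^* - x^0$ also lies in $\mathrm{Range}(\bA^\top)$, whence $x^k - x^* \in \mathrm{Range}(\bA^\top)$. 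On this subspace the Rayleigh quotient of $\bW$ is bounded below by its smallest nonzero eigenvalue, so $(x^k - x^*)^\top \bW (x^k - x^*) \geq \lambda_{\min}^+ \|x^k - x^*\|^2$. Substituting gives the one-step contraction $\Exp_k[\|x^{k+1}-x^*\|^2] \leq (1 - \lambda_{\min}^+)\|x^k - x^*\|^2 = \rho\|x^k - x^*\|^2$. Finally I would take full expectations and unroll by the tower property (induction on $k$) to reach $\Exp[\|x^k - x^*\|^2] \leq \rho^k\|x^0 - x^*\|^2$; the claim $\rho \in [0,1]$ follows because each $\bZ_k$ is a projection, so $\mathbf{0} \preceq \bZ_k \preceq \bI$ forces $\mathbf{0} \preceq \bW \preceq \bI$ and hence $0 < \lambda_{\min}^+ \leq 1$.
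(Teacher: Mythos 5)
Your argument reconstructs what is in fact the standard proof from the references the paper cites for this theorem (the paper itself gives no proof): the error recursion $x^{k+1}-x^* = (\bI - \bZ_k)(x^k-x^*)$ with $\bZ_k \eqdef \bA^\top \bH_k \bA$, the observation that $\bZ_k$ is the orthogonal projector onto $\mathrm{Range}(\bA^\top \bS_k)$, the one-step conditional expectation identity involving $\bW$, the invariance $x^k - x^* \in \mathrm{Range}(\bA^\top)$, and the restricted Rayleigh-quotient bound. All of those steps are correct as written, and the unrolling via the tower property is fine.

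There is, however, one genuine gap: you assert $\mathrm{Null}(\bA) = \mathrm{Null}(\bW)$ as though it were automatic, and your key inequality $(x^k-x^*)^\top \bW (x^k-x^*) \ge \lambda_{\min}^+ \|x^k-x^*\|^2$ stands or falls with it. Only the inclusion $\mathrm{Null}(\bA) \subseteq \mathrm{Null}(\bW)$ is free. For the reverse, note that $x^\top \bW x = 0$ iff $\bZ x = 0$ almost surely, i.e.\ iff $\bS^\top \bA x = 0$ for every sketch $\bS$ drawn with positive probability; hence $\mathrm{Null}(\bW)$ is the intersection of the spaces $\mathrm{Null}(\bS^\top \bA)$, which is strictly larger than $\mathrm{Null}(\bA)$ whenever the sampled sketches fail to cover the row space of $\bA$. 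Concretely, run RK on a system with two linearly independent rows using a distribution that picks row $1$ with probability one: then $\bW$ is the rank-one projector onto $\mathrm{span}(\bA_{1:}^\top)$, so $\lambda_{\min}^+ = 1$ and $\rho = 0$, and your bound would force $x^1 = x^*$, which is false. In such cases the error, though in $\mathrm{Range}(\bA^\top)$, has a component in $\mathrm{Null}(\bW)$ on which the quadratic form vanishes, and the contraction fails. The missing ingredient is precisely the \emph{exactness} condition $\mathrm{Null}(\bW) = \mathrm{Null}(\bA)$ assumed in the cited works---this is the ``certain (weak) condition on $\cD$'' the paper alludes to just before \eqref{stoch_reform}. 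Your proof becomes complete once you state it as a hypothesis and verify it for the samplings actually used here: for RK with $p_i > 0$ for all $i$ one has $\bigcap_i \mathrm{Null}(\bA_{i:}) = \mathrm{Null}(\bA)$, and similarly for RBK when every edge is selected with positive probability.
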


In \cite{LoizouRichtarik}, the behavior of both RK and RBK as gossip algorithms was described, and a comparison with the convergence results of existing randomized gossip protocols was made. In particular, it was shown that the most basic randomized gossip algorithm \cite{boyd2006randomized}  (``randomly pick an edge $(i,j)\in \cE$ and then replace the values stored at vertices $i$ and $j$ by their average'') is an instance of RK   applied to the linear system $\bA x=0 $, where the $\bA$ is the incidence matrix of  $\cG$. RBK can also be  interpreted  as a gossip algorithm:
\begin{thm}[\cite{LoizouRichtarik}, RBK as a Gossip Algorithm]
\label{TheoremRBK}
Each iteration of RBK for solving $\bA x=0$ works as follows:
1) Select a random set of edges $\cS \subseteq \cE$, 
2) Form subgraph $\cG_k$ of $\cG$ from the selected  edges, 
3) For each connected component of $\cG_k$, replace node values with their average.
\end{thm}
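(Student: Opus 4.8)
The plan is to show that, with $b=0$, the RBK update \eqref{RBK} is exactly the orthogonal projection of $x^k$ onto the null space of the selected submatrix $\bA_{C:}$, and then to identify that projection with component-wise averaging. Since the rows of the incidence matrix $\bA \in \R^{|\cE|\times n}$ are indexed by the edges, choosing the column index set $C$ in $\bS=\bI_{:C}$ is the same as choosing a random edge subset $\cS\subseteq\cE$, which determines the subgraph $\cG_k=(\cV,\cS)$; this accounts for step~1, so it remains to show that the resulting deterministic update performs step~3. First I would invoke the standard fact that for any matrix $\bM$ the product $\bM^\top(\bM\bM^\top)^\dagger \bM$ is the orthogonal projector onto the row space $\text{Range}(\bM^\top)$, a statement valid irrespective of the rank of $\bM$. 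Applying this with $\bM=\bA_{C:}$, the operator in \eqref{updateSkProj} becomes $\bI-\bA_{C:}^\top(\bA_{C:}\bA_{C:}^\top)^\dagger\bA_{C:}$, i.e.\ the projector onto the orthogonal complement of the row space, which is $\text{Null}(\bA_{C:})$. Hence $x^{k+1}=\Pi_{\text{Null}(\bA_{C:})}(x^k)$.

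Next I would characterize this null space combinatorially. Because $\bA$ is the incidence matrix, the row corresponding to edge $(i,j)$ is $e_i-e_j$, so the system $\bA_{C:}x=0$ is equivalent to requiring $x_i=x_j$ for every selected edge $(i,j)\in\cS$. By transitivity of equality along paths inside the subgraph, this holds if and only if $x$ is constant on each connected component of $\cG_k$. Therefore $\text{Null}(\bA_{C:})$ is precisely the subspace of vectors that are constant on each connected component of $\cG_k$.

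Finally I would compute the projection explicitly. Minimizing $\|x-x^k\|^2$ over this subspace decouples across the connected components, since the value assigned to distinct components is unconstrained. On a component $\cC_\ell$ the admissible vector takes a single value $t_\ell$, and $\sum_{i\in\cC_\ell}(t_\ell-x_i^k)^2$ is minimized at $t_\ell=\tfrac{1}{|\cC_\ell|}\sum_{i\in\cC_\ell}x_i^k$, the average of the entries in that component. Singleton components (nodes touched by no selected edge) retain their original values, which is consistent with the claim. Combining the three steps gives the stated gossip interpretation.

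I expect the only delicate point to be the handling of the Moore-Penrose pseudoinverse when $\cG_k$ contains cycles, in which case $\bA_{C:}$ has linearly dependent rows and $\bA_{C:}\bA_{C:}^\top$ is singular. This is exactly why the projector identity must be stated for the pseudoinverse rather than an ordinary inverse; once that identity is in place, the null-space characterization and the averaging computation are routine.
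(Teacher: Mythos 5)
Your proposal is correct: identifying the RBK update with the orthogonal projection onto $\text{Null}(\bA_{C:})$ via the pseudoinverse projector identity, characterizing that null space as vectors constant on each connected component of $\cG_k$, and computing the projection component-wise as averaging is exactly the argument behind this theorem. Note that this paper itself offers no proof---it imports the result from \cite{LoizouRichtarik}---and your derivation matches the proof given there (which phrases the projection variationally, as the best approximation to $x^k$ subject to $\bA_{C:}x=0$, rather than through the explicit projector formula, but this is the same argument in substance), including the correct observation that the pseudoinverse handles rank-deficiency when the selected edges contain cycles.
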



\subsection{The Heavy Ball momentum}
A detailed study of several (equivalent) {\em stochastic reformulations} of consistent linear systems was developed in \cite{ASDA}. This new viewpoint facilitated the development and   analysis of relaxed variants (with relaxation parameter $\omega \in (0,2)$) of the sketch-and-project update \eqref{sketchproject}. In particular, one of the reformulations is the {\em stochastic optimization} problem
\begin{equation}
\label{stoch_reform}
\min_{x\in \R^n} f(x) \eqdef \Exp_{\mS\sim \cD}[f_\mS(x)], \quad \text{where}
\end{equation}
\begin{equation}
\label{eq:f_s}
f_{\mS}(x) \eqdef \tfrac{1}{2}\|\mA x - b\|_{\mH}^2 = \tfrac{1}{2}(\mA x - b)^\top \mH (\mA x - b),
\end{equation}
and $\mH$ is the random symmetric positive semi-definite matrix defined in \eqref{MatrixH}.

Under certain (weak) condition on $\cD$,  the set of minimizers of  $f$ is identical to the set of the solutions of the  linear system. In \cite{ASDA}, problem \eqref{stoch_reform} was solved via Stochastic Gradient Descent (SGD): 
\begin{equation}
\label{SGD}
x^{k+1}=x^k-\omega \nabla f_{\bS_k}(x^k),
\end{equation} 
and a linear rate of convergence was proved despite the fact that  $f$ is not necessarily strongly convex and that a fixed stepsize $\omega>0$ is used. Observe that the gradient of the stochastic function \eqref{eq:f_s} is given by
\begin{equation}
\label{stochGradi}
\nabla f_{\mS_k} (x) \overset{\eqref{eq:f_s}}{=}  \mA^\top \mH_k (\mA x - b).
\end{equation}
and as a result, it is easy to see that for $\omega=1$, the SGD update \eqref{SGD} reduces to the  sketch-and-project update~\eqref{sketchproject}.

The recent works \cite{loizou2017linearly,loizou2017momentum} analyze momentum variants of SGD, with the goal to accelerate the convergence of the method for solving problem \eqref{stoch_reform}. SGD with momentum---also known as the stochastic heavy ball method (SHB)---is a well known algorithm in the optimization literature for solving stochastic optimization problems, and it is extremely popular in areas such as deep learning \cite{sutskever2013importance, szegedy2015going, krizhevsky2012imagenet, wilson2017marginal}. However, even though SHB is used extensively in practice, its theoretical convergence behavior is not well understood. To the best of our knowledge,  \cite{loizou2017linearly,loizou2017momentum} are the first that prove linear convergence of SHB in any setting.

The update rule of SHB for solving problem \eqref{stoch_reform} is formally presented in the following algorithm:
\begin{algorithm}[H]
	\caption{Stochastic Heavy Ball (SHB) }
	\label{alg:SARAHHB}
	\small \small
	\begin{algorithmic}[1]
		\State {\bf Parameters:} Distribution $\mathcal{D}$ from which method samples matrices; stepsize/relaxation parameter $\omega \in \R$; momentum parameter $\beta$.
		\State {\bf Initialize:} $x^0,x^1 \in \R^n$
		\For{$k=1,2,\dots$} 
		\State Draw a fresh $\bS_k \sim \cD$
		\State   Set 
$x^{k+1}=x^k-\omega \nabla f_{\bS_k}(x^k)+\beta (x^k-x^{k-1})
$
		\EndFor
		\State {\bf Output:} The last iterate $x^k$
	\end{algorithmic}
\end{algorithm}

Using the expression for the stochastic gradient \eqref{stochGradi}, the update rule of SHB can  be written more explicitly:
\begin{equation}
\label{SPmomentum}
x^{k+1}=x^k -\omega \bA^\top \bH_k(\bA x^k-b) + \beta(x^k - x^{k-1}).
\end{equation}
Using the same choice of distribution $\cD$ as in equation \eqref{RK} and \eqref{RBK}, we now obtain momentum variants of RK and RBK: 
\begin{itemize}
\item RK with momentum (mRK): $$x^{k+1}=x^k -\omega \tfrac{\bA_{i :} x^k -b_{i}}{\|\bA_{i :}\|_2^2} \bA_{i :}^ \top + \beta(x^k - x^{k-1}) $$
\item RBK with momentum (mRBK):
$$x^{k+1}=x^k -\omega \bA_{C:}^\top (\bA_{C:}\bA_{C:}^\top)^\dagger (\bA_{C:}x^k-b_C)+ \beta(x^k - x^{k-1}) $$
\end{itemize}

In \cite{loizou2017momentum}, two main theoretical results  describing the behavior of SHB (and as a result also the special cases mRK and mRBK) were presented:
\begin{thm}[Theorem 1, \cite{loizou2017momentum}]
\label{L2}
Choose $x^0= x^1\in \R^n$. Let $\{x^k\}_{k=0}^\infty$ be the sequence of random iterates produced by SHB.  Let $\lambda_{\min}^+$ (resp,\ $\lambda_{\max}$) be the smallest nonzero (resp.\ largest) eigenvalue of $\bW$. Assume $0< \omega < 2$ and $\beta \geq 0$ and that the expressions
$a_1 \eqdef 1+3\beta+2\beta^2 - (\omega(2-\omega) +\omega\beta)\lambda_{\min}^+$ and
$a_2 \eqdef \beta +2\beta^2 + \omega \beta \lambda_{\max}$
satisfy $a_1+a_2<1$. Then 
\begin{equation}\label{eq:nfiug582}\Exp[\|x^{k}-x^*\|^2] \leq q^k (1+\delta)  \|x^0-x^*\|^2,
\end{equation}
and 
$\Exp[f(x^k)] \leq q^k  \tfrac{\lambda_{\max}}{2} (1+\delta) \|x^{0}-x^*\|^2,$
where  $q=\tfrac{1}{2} (a_1+\sqrt{a_1^2+4a_2})$ and $\delta=q-a_1$. Moreover, $a_1+a_2 \leq q <1$.
\end{thm}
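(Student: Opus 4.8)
The plan is to track the error vector $r^k \eqdef x^k - x^*$ and reduce everything to a scalar two-step recursion. First I would use that $x^*\in\cL$, so $b = \bA x^*$ and hence $\nabla f_{\bS_k}(x^k) = \bA^\top\bH_k\bA(x^k-x^*) = \bZ_k r^k$, where $\bZ_k\eqdef\bA^\top\bH_k\bA$. Substituting into the SHB update and subtracting $x^*$ gives the error recursion $r^{k+1} = (1+\beta)r^k - \beta r^{k-1} - \omega\bZ_k r^k$. Two structural facts drive the argument: (i) $\bZ_k$ is a symmetric idempotent matrix (an orthogonal projection), which follows from $\bH_k\bA\bA^\top\bH_k = \bH_k$ via the pseudoinverse identity $M^\dagger M M^\dagger = M^\dagger$; consequently $\bZ_k^\top\bZ_k = \bZ_k$ and $\Exp[\bZ_k] = \bW$; and (ii) since $x^* = \Pi_{\cL}(x^0)$ and every increment lies in $\mathrm{Range}(\bA^\top) = \mathrm{Range}(\bW)$, an easy induction shows $r^k\in\mathrm{Range}(\bW)$ for all $k$, so that $\lambda_{\min}^+\|r^k\|^2 \le (r^k)^\top\bW r^k \le \lambda_{\max}\|r^k\|^2$.

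Next I would condition on the history $x^0,\dots,x^k$ (so $r^k,r^{k-1}$ are fixed and $\bS_k$ is fresh) and expand $\Exp_k[\|r^{k+1}\|^2]$. Writing $u\eqdef(1+\beta)r^k-\beta r^{k-1}$ and using $\bZ_k^\top\bZ_k=\bZ_k$ together with $\Exp[\bZ_k]=\bW$, the expectation of the square equals $\|u\|^2 - 2\omega\, u^\top\bW r^k + \omega^2 (r^k)^\top\bW r^k$. The delicate and genuinely load-bearing step is bounding the resulting cross terms so that the spectral constants land in exactly the advertised places: the inner product $(r^k)^\top r^{k-1}$ is controlled by AM--GM in the standard norm; the mixed curvature term $(r^{k-1})^\top\bW r^k$ is controlled by AM--GM in the $\bW$-seminorm, producing $\omega\beta$-weighted copies of $(r^k)^\top\bW r^k$ and $(r^{k-1})^\top\bW r^{k-1}$; crucially the self-term $(r^k)^\top\bW r^k$ must be merged with the gradient curvature term \emph{before} invoking the lower bound $\lambda_{\min}^+$, whereas the curvature on $r^{k-1}$ uses the upper bound $\lambda_{\max}$. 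Collecting coefficients then yields precisely $\Exp_k[\|r^{k+1}\|^2]\le a_1\|r^k\|^2 + a_2\|r^{k-1}\|^2$ with $a_1,a_2$ as defined; taking total expectations gives $F_{k+1}\le a_1 F_k + a_2 F_{k-1}$ for $F_k\eqdef\Exp[\|x^k-x^*\|^2]$.

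Finally I would solve this scalar recursion. The number $q$ is the larger root of $t^2 = a_1 t + a_2$, so it satisfies the identity $q^2 = a_1 q + a_2$, and $\delta = q - a_1\ge 0$ obeys $q\delta = a_2$. This identity is exactly what propagates the geometric bound: a short induction (equivalently, showing that the potential $F_{k+1}+\delta F_k$ contracts by the factor $q$), started from the initialization $F_1 = F_0$, yields the estimate $F_k \le q^k(1+\delta)F_0$. The function-value bound is then immediate from $f(x) = \tfrac12(x-x^*)^\top\bW(x-x^*)\le\tfrac{\lambda_{\max}}{2}\|x-x^*\|^2$, giving $\Exp[f(x^k)]\le q^k\tfrac{\lambda_{\max}}{2}(1+\delta)F_0$. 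For the concluding inequalities, $q<1$ follows because the upward quadratic $t^2-a_1t-a_2$ is strictly positive at $t=1$ under the hypothesis $a_1+a_2<1$, while $a_1+a_2\le q$ is equivalent to $\delta\ge a_2$, which after multiplying by $q$ and using $q\delta = a_2$ reduces to $q\le 1$. The main obstacle throughout is the cross-term bookkeeping in the second step: only this specific merge-then-bound allocation reproduces the stated $a_1$ and $a_2$, and any symmetric split would give looser constants.
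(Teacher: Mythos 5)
First, a point of order: this paper contains no proof of Theorem~\ref{L2} at all --- the result is imported verbatim (as ``Theorem 1'') from \cite{loizou2017momentum}, so your attempt can only be compared against the proof given in that reference. Measured against it, your proposal is correct in substance and follows the same architecture: pass to the error recursion $r^{k+1}=(1+\beta)r^k-\beta r^{k-1}-\omega \bZ_k r^k$ with $\bZ_k=\bA^\top\bH_k\bA$ an orthogonal projection, derive the two-term recursion $F_{k+1}\le a_1F_k+a_2F_{k-1}$ for $F_k=\Exp[\|x^k-x^*\|^2]$ with exactly the stated $a_1,a_2$, and solve it via the larger root $q$ of $t^2=a_1t+a_2$. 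The middle step is packaged differently but equivalently: the source bounds the cross terms \emph{pathwise} before taking expectation, using $2\langle a,a-b\rangle=\|a\|^2+\|a-b\|^2-\|b\|^2$, the bound $\|a-b\|^2\le 2\|a\|^2+2\|b\|^2$, the projection identities $\langle \nabla f_{\bS_k}(x^k),x^k-x^*\rangle=\|\nabla f_{\bS_k}(x^k)\|^2=2f_{\bS_k}(x^k)$, and convexity of $f_{\bS_k}$ for the gradient--momentum cross term; you instead average over $\bS_k$ first (via idempotence of $\bZ_k$) and then apply AM--GM in the Euclidean norm and in the $\bW$-seminorm. For quadratics, convexity \emph{is} the positive-semidefiniteness inequality you invoke, so the two routes produce identical constants, as your ``merge-then-bound'' discussion correctly anticipates.

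Two caveats. (i) Your spectral lower bound $\lambda_{\min}^+\|r^k\|^2\le (r^k)^\top\bW r^k$ requires $r^k\in\mathrm{Range}(\bW)$, while your induction places $r^k$ in $\mathrm{Range}(\bA^\top)$; the identification $\mathrm{Range}(\bA^\top)=\mathrm{Range}(\bW)$ is precisely the ``exactness'' condition on $\cD$ that this paper only alludes to (``certain weak condition''), so it should be stated as a hypothesis rather than asserted as a fact. (ii) There is an off-by-one in your final step: the potential argument ($F_{k+1}+\delta F_k$ contracts by the factor $q$, using $a_1+\delta=q$ and $q\delta=a_2$) proves $F_{k+1}\le q^k(1+\delta)F_0$, equivalently $F_k\le q^{k-1}(1+\delta)F_0$, and \emph{not} the stated $F_k\le q^k(1+\delta)F_0$. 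A direct induction on the stated form breaks at the base case $k=1$: there $F_1=F_0$, while $q(1+\delta)=q+a_2$ can be below $1$ (take $\beta=0$, so $a_2=0$ and $q=\max\{a_1,0\}<1$). This is not really your error --- the same index slip sits in the statement of Theorem 1 of \cite{loizou2017momentum}, whose own auxiliary sequence lemma likewise yields only the shifted form --- but a careful write-up should note that only $F_{k+1}\le q^k(1+\delta)F_0$ is attainable by this (or the original) argument.
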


\begin{thm}[Theorem 4, \cite{loizou2017momentum}]
\label{theoremheavyball}
Let $\{x^k\}_{k=0}^{\infty}$ be the sequence of random iterates produced by SHB, started with $x^0= x^1\in \R^n$, with relaxation parameter (stepsize)  $0<\omega \leq1/\lambda_{\max}$ and momentum parameter  $(1-\sqrt{\omega \lambda_{\min}^+})^2 < \beta <1$. Let $x^* = \Pi^\mB_{\cL}(x^0)$. Then there exists a constant $C >0$ such that for all $k\geq0$ we have 
$\|\Exp[x^{k} -x^*]\|^2  \leq \beta^k C.$
\end{thm}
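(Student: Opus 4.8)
The plan is to pass to expectations and reduce the claim to the stability of a single deterministic second-order linear recursion. Write $E^k \eqdef \Exp[x^k-x^*]$. Since $x^*\in\cL$ we have $\bA x^*=b$, so $\bA x^k-b=\bA(x^k-x^*)$, and the update \eqref{SPmomentum} becomes $x^{k+1}-x^*=(x^k-x^*)-\omega\bA^\top\bH_k\bA(x^k-x^*)+\beta\big((x^k-x^*)-(x^{k-1}-x^*)\big)$. The matrix $\bS_k$ (hence $\bH_k$) is drawn afresh and is therefore independent of $x^k$, which is a function of $\bS_0,\dots,\bS_{k-1}$ only; conditioning on $x^k$ and using $\Exp[\bA^\top\bH_k\bA\mid x^k]=\bW$ gives $\Exp[\bA^\top\bH_k\bA(x^k-x^*)]=\bW E^k$. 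Taking full expectations I obtain the deterministic recursion $E^{k+1}=\big((1+\beta)\bI-\omega\bW\big)E^k-\beta E^{k-1}$, with initial data $E^0=E^1=x^0-x^*$ because $x^0=x^1$.

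Next I would diagonalise $\bW$. As $\bW=\Exp[\bA^\top\bH\bA]$ is symmetric and each $\bA^\top\bH_k\bA$ is an orthogonal projector (onto $\mathrm{range}(\bA^\top\bS_k)$), its eigenvalues lie in $[0,1]$; in particular the hypothesis $\omega\le1/\lambda_{\max}$ yields $\omega\lambda\le\omega\lambda_{\max}\le1$ for every eigenvalue $\lambda$ of $\bW$. Let $\bW=\sum_i\lambda_i u_iu_i^\top$ be a spectral decomposition with orthonormal $u_i$, and set $d_i^k\eqdef u_i^\top E^k$; then the recursion decouples into the scalar recurrences $d_i^{k+1}=(1+\beta-\omega\lambda_i)d_i^k-\beta d_i^{k-1}$. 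The crucial reduction is that the directions along $\ker\bW$ never contribute: because $x^*=\Pi_{\cL}(x^0)$ we have $x^0-x^*\in\mathrm{range}(\bA^\top)$, and under the exactness condition on $\cD$ (so that the minimisers of $f$ coincide with $\cL$) one has $\mathrm{range}(\bA^\top)=\mathrm{range}(\bW)$. Hence $d_i^0=d_i^1=0$ whenever $\lambda_i=0$, forcing $d_i^k\equiv0$ there, and I only need to control the indices with $\lambda_i\in[\lambda_{\min}^+,\lambda_{\max}]$.

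Finally I would analyse each surviving scalar recurrence through its characteristic polynomial $t^2-(1+\beta-\omega\lambda_i)t+\beta$, whose roots obey $t_1t_2=\beta$. When the discriminant $\Delta(\lambda)\eqdef(1+\beta-\omega\lambda)^2-4\beta$ is strictly negative the roots are complex conjugates of modulus $\sqrt{t_1t_2}=\sqrt{\beta}$, and the closed form $d_i^k=\beta^{k/2}(A_i\cos k\theta_i+B_i\sin k\theta_i)$ gives $|d_i^k|\le C_i\,\beta^{k/2}$ with $C_i$ depending only on $d_i^0,d_i^1,\theta_i$. The heart of the argument is showing $\Delta(\lambda)<0$ uniformly on $[\lambda_{\min}^+,\lambda_{\max}]$. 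Taking square roots in the hypothesis $(1-\sqrt{\omega\lambda_{\min}^+})^2<\beta$ (and using $\beta<1$, $\omega\lambda_{\min}^+\le1$) rewrites it as $(1-\sqrt{\beta})^2<\omega\lambda_{\min}^+$, which is exactly $\Delta(\lambda_{\min}^+)<0$. Viewing $\Delta$ as a convex parabola in $u=\omega\lambda$ with vertex at $u=1+\beta>1\ge\omega\lambda_{\max}$, it is decreasing on the whole spectral interval, so $\Delta(\lambda)\le\Delta(\lambda_{\min}^+)<0$ throughout. Summing the componentwise bounds, $\|E^k\|^2=\sum_i(d_i^k)^2\le\beta^k\sum_iC_i^2=:\beta^k C$, which is the asserted estimate.

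The main obstacle is the discriminant and spectral bookkeeping of the last paragraph: one must convert the momentum condition into the \emph{strict} negativity of $\Delta$ uniformly over $[\lambda_{\min}^+,\lambda_{\max}]$, so that the roots are genuinely complex and of modulus exactly $\sqrt{\beta}$ — strictness is what rules out a repeated root, whose accompanying factor $k\beta^{k/2}$ would break the clean $\beta^k$ rate — and one must correctly discard the kernel directions via $x^0=x^1$ together with the projection identity. By contrast, the passage to expectations (using the freshly drawn, independent $\bS_k$) and the decoupling into scalar recurrences are routine. An equivalent route is to rewrite the recursion in companion form on $(E^{k+1},E^k)$ and bound $\|\bT^k\|$ by the spectral radius of the companion matrix $\bT$; since all eigenvalues of $\bT$ on the relevant invariant subspace are distinct complex numbers of modulus $\sqrt{\beta}$, $\bT$ is diagonalisable there and the same constant $C$ appears.
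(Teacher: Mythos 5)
Your proposal is correct, but note that this paper never proves Theorem~\ref{theoremheavyball} itself: it is imported verbatim (specialized to $\bB=\bI$) from the cited reference \cite{loizou2017momentum}, so there is no in-paper proof to compare against. Your argument is essentially the proof given in that reference: pass to the deterministic expectation recursion $E^{k+1}=((1+\beta)\bI-\omega\bW)E^k-\beta E^{k-1}$ via the tower property and the fresh sampling of $\bS_k$, diagonalize the symmetric matrix $\bW$, eliminate the kernel components using $x^0-x^*\in \mathrm{range}(\bA^\top)=\mathrm{range}(\bW)$ (exactness of $\cD$) together with $x^0=x^1$, and convert the momentum condition $(1-\sqrt{\omega\lambda_{\min}^+})^2<\beta<1$ with $\omega\lambda_{\max}\le 1$ into strict negativity of the discriminant on $[\lambda_{\min}^+,\lambda_{\max}]$, so each surviving scalar recurrence has complex conjugate characteristic roots of modulus exactly $\sqrt{\beta}$. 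You also correctly flagged the two points where a blind attempt could fail --- the kernel directions (whose recurrence has a root equal to $1$ and would otherwise contribute a non-decaying constant) and the strictness needed to exclude a repeated root --- so there is no gap to report.
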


Using Theorem~\ref{theoremheavyball} and by a proper combination of the stepsize $\omega$ and the momentum parameter $\beta$, SHB enjoys an accelerated linear convergence rate in mean, \cite{loizou2017momentum}.
\begin{cor}
(i) If $ \omega= 1$ and $\beta= (1- \sqrt{0.99 \lambda_{\min}^+}) ^2$, then the iteration complexity of SHB becomes: $ \tilde{O}(\sqrt{1/ \lambda_{\min}^+})$.\\
(ii) If $ \omega= 1/\lambda_{\max}$ and $\beta= (1- \sqrt{ 0.99\lambda_{\min}^+/ \lambda_{\max}})^2$, then the iteration complexity of SHB becomes: $\tilde{O}(\sqrt{\lambda_{\max}/ \lambda_{\min}^+})$.
\end{cor}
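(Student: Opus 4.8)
The plan is to read the iteration complexity directly off the linear rate $\beta^k$ supplied by Theorem~\ref{theoremheavyball}, after first confirming that both parameter choices are admissible. For part (i) I would observe that $\bA^\top\bH_k\bA$ is an orthogonal projection: writing $\bZ=\bA^\top\bS_k$, the matrix $\bA^\top\bH_k\bA=\bZ(\bZ^\top\bZ)^\dagger\bZ^\top$ is idempotent, so $\bW=\Exp[\bA^\top\bH\bA]$ is an average of projections and has spectrum in $[0,1]$. Hence $\lambda_{\max}\le 1$, and the choice $\omega=1$ satisfies $0<\omega\le 1/\lambda_{\max}$; in part (ii) the choice $\omega=1/\lambda_{\max}$ meets this bound with equality. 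In both cases, writing $\beta=(1-\theta)^2$ with $\theta=\sqrt{0.99\,\omega\lambda_{\min}^+}$, the factor $0.99<1$ forces $\theta<\sqrt{\omega\lambda_{\min}^+}$, so that $(1-\sqrt{\omega\lambda_{\min}^+})^2<\beta<1$ and the momentum constraint of Theorem~\ref{theoremheavyball} holds (using $\lambda_{\min}^+\le\lambda_{\max}\le1$ to guarantee $\theta<1$ and hence $\beta\in(0,1)$).

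Next I would turn the rate into an iteration count. Theorem~\ref{theoremheavyball} gives $\|\Exp[x^k-x^*]\|^2\le\beta^k C$ for a constant $C>0$ independent of $k$, so $\|\Exp[x^k-x^*]\|^2\le\epsilon$ is guaranteed once $k\ge \log(C/\epsilon)/\log(1/\beta)$. The key elementary step is the inequality $\log(1/\beta)\ge 1-\beta$ for $\beta\in(0,1)$ (immediate from $\log\beta\le\beta-1$), which yields $1/\log(1/\beta)\le 1/(1-\beta)$. Thus the complexity is $O\big(\tfrac{1}{1-\beta}\log(1/\epsilon)\big)$, and absorbing the $\log(1/\epsilon)$ into the $\tilde O$ notation reduces the whole claim to bounding $1/(1-\beta)$.

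Finally I would compute $1-\beta$ for each choice. With $\beta=(1-\theta)^2$ one has $1-\beta=\theta(2-\theta)\ge\theta$, since $\theta\in(0,1)$ implies $2-\theta\in(1,2)$, so $1/(1-\beta)\le 1/\theta$. In part (i), $\theta=\sqrt{0.99\,\lambda_{\min}^+}$ gives $1/(1-\beta)\le 1/\sqrt{0.99\,\lambda_{\min}^+}=O(\sqrt{1/\lambda_{\min}^+})$; in part (ii), $\theta=\sqrt{0.99\,\lambda_{\min}^+/\lambda_{\max}}$ gives $1/(1-\beta)=O(\sqrt{\lambda_{\max}/\lambda_{\min}^+})$. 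Combining with the previous paragraph produces the stated complexities $\tilde O(\sqrt{1/\lambda_{\min}^+})$ and $\tilde O(\sqrt{\lambda_{\max}/\lambda_{\min}^+})$.

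The calculation itself is short; the genuine points requiring care are the admissibility checks — in particular justifying $\lambda_{\max}\le 1$ so that $\omega=1$ is legal in part (i) — and getting the direction of the estimate $\log(1/\beta)\ge 1-\beta$ right, rather than relying on the informal approximation $\log(1/\beta)\approx 1-\beta$ without controlling which way the bound goes.
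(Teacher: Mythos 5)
Your proof is correct and follows essentially the route the paper intends (the corollary is stated without proof, deferring to Theorem~\ref{theoremheavyball}): verify the parameter choices are admissible, read off the linear rate $\beta^k$, and convert it to an iteration count via $1/(1-\beta)\le 1/\theta = O(\sqrt{1/(\omega\lambda_{\min}^+)})$. The two points you handle explicitly --- that $\lambda_{\max}\le 1$ (since $\bW$ is an expectation of orthogonal projections), which legitimizes $\omega=1$ in part (i), and the correct direction of the bound $\log(1/\beta)\ge 1-\beta$ --- are exactly the steps the paper leaves implicit, and you get both right.
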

\section{Randomized Gossip protocols with momentum}
\label{theProtocols}
Having presented SHB for solving the stochastic optimization problem \eqref{stoch_reform} and describing its sketch-and-project nature \eqref{SPmomentum}, let us now describe its behavior as a randomized gossip protocol when applied to solving the AC system $\bA x=0$, where $\bA\in |\cE| \times n$ is the incidence matrix of the network.  

Since $b=0$, method \eqref{SPmomentum} can be simplified to:
\begin{equation}
\label{momentumupdateb0}
x^{k+1}=\left[\bI -\omega \bA^\top \bH_k \bA\right] x^k + \beta(x^k - x^{k-1}).
\end{equation}

In the rest of this section we focus on  two special cases of \eqref{momentumupdateb0}: RK with momentum and RBK with momentum.

\subsection{Randomized Kaczmarz Gossip with momentum}
When RK is applied to solve an AC system $\bA x=0$, one recovers the famous pairwise gossip algorithm~\cite{boyd2006randomized}.  Algorithm~\ref{RKmomentum} describes how the relaxed variant of randomized Kaczmarz with momentum behaves as a gossip algorithm. See also Figure~\eqref{fig:mRK} for a graphical illustration of the method.

\begin{algorithm}[t!]
	\caption{mRK: Randomized Kaczmarz with momentum as a gossip algorithm}
	\label{RKmomentum}
	\small \small
	\begin{algorithmic}[1]
		\State {\bf Parameters:} Distribution $\mathcal{D}$ from which method samples matrices; stepsize/relaxation parameter $\omega \in \R$; heavy ball/momentum parameter $\beta$.
		\State {\bf Initialize:} $x^0 ,x^1 \in \R^n$
		\For{$k=1,2,\dots$} 
		\State Pick an edge $e=(i,j)$ following the distribution $\cD$
		\State The values of the nodes are updated as follows:
\begin{itemize}
\item Node $i$: $x_i^{k+1}= \frac{2-\omega}{2}x_i^k+ \frac{\omega}{2}x_j^k+\beta (x_i^k - x_i^{k-1})$
\item Node $j$: $x_j^{k+1}= \frac{2-\omega}{2} x_j^k+\frac{\omega}{2}x_i^k+\beta (x_j^k - x_j^{k-1})$
\item Any other node $\ell$: $x_\ell^{k+1}=x_\ell^k+\beta (x_\ell^k - x_\ell^{k-1})$
\end{itemize}
		\EndFor
		\State {\bf Output:} The last iterate $x^k$
	\end{algorithmic}
\end{algorithm}

\begin{rem}
In the special case with $\beta=0$ (zero momentum) only the two nodes of edge $e=(i,j)$ update their values. In this case the two nodes do not update their values to their exact average but to a convex combination that depends on the stepsize $\omega \in (0,2)$. To obtain the pairwise gossip algorithm of \cite{boyd2006randomized}, we should further choose $\omega=1$.
\end{rem}

\textbf{Distributed Nature of the Algorithm:} Here we highlight a few ways to implement mRK in a distributed fashion:
 \emph{Asynchronous pairwise broadcast gossip:}  In this protocol each node $i \in \cV$ of the network $\cG$ has a clock that ticks at the times of a rate 1 Poisson process. The inter-tick times are exponentially distributed, independent across nodes, and independent across time. This is equivalent to a global clock ticking at a rate $n$ Poisson process which wakes up an edge of the network at random. In particular, in this implementation mRK works as follows:  In the $k^{th}$ iteration (time slot) the clock of node $i$ ticks and node $i$ randomly contact one of its neighbors and simultaneously broadcast a signal to inform the nodes of the whole network that is updating (this signal does not contain any private information of node $i$). The two nodes $(i,j)$ share their information and update their private values following the update rule of Algorithm~\ref{RKmomentum} while all the other nodes updating their values using their own information. In each iteration only one pair of nodes exchange their private values. 
 
 \emph{Synchronous pairwise gossip:} In this protocol a single global clock is available to all nodes. The time is assumed to be slotted commonly across nodes and in each time slot only a pair of nodes of the network is randomly activated and exchange their information following the update rule of Algorithm~\ref{RKmomentum}.  The remaining not activated nodes update their values using their own last two private values.  Note that this implementation of mRK comes with the disadvantage that requires a central entity which choose the activate pair of nodes in each step.

 \emph{Asynchronous pairwise gossip with common counter:} 
The update rule of the nodes of the active pair $(i,j)$ in Algorithm~\ref{RKmomentum} can be rewritten as follows:
$$ x_i^{k+1}= x_i^k + \beta (x_i^k - x_i^{k-1}) + \tfrac{\omega}{2} (x_j^k -x_i^k)$$
$$x_j^{k+1}= x_j^k + \beta (x_j^k - x_j^{k-1}) + \tfrac{\omega}{2} (x_i^k -x_j^k)$$
In particular observe that in their update rule they have the expression $x_i^k + \beta (x_i^k - x_i^{k-1})$ which is precisely the update of all non activate nodes of the network. Thus if we assume that the nodes share a common counter that counts how many iterations take place and each node $i$ saves also the last iterate $k_i$ that it was activated then the algorithm can work in distributed fashion as follows:

Let us denote the number of total iterations (common counter) that becomes available to the activate nodes of each step as $K$ and let us define with $i_k=K-k_{i}$ the number of iterations between the current iterate and the last time that the $i^{th}$ node is picked (iteration $k_{i}$) then the update rule of the Algorithm~\ref{RKmomentum} can be equivalently expressed as: 
\begin{itemize}
\item  Pick an edge $e=(i,j)$ at random following $\cD$. 
\item The private values of the nodes  are updated as follows:
$$ x_i^{k+1}= i_k \left[x_i^k + \beta (x_i^k - x_i^{k-1}) \right]+ \tfrac{\omega}{2} (x_j^k -x_i^k)$$
$$x_j^{k+1}= j_k \left[x_j^k + \beta (x_j^k - x_j^{k-1}) \right] + \tfrac{\omega}{2} (x_i^k -x_j^k)$$
$k_i = k_j =k+1$; for any other node $\ell$: $x_\ell^{k+1}=x_\ell^k$
\end{itemize}

\begin{figure}[t!]
\begin{minipage}[b]{1.0\linewidth}
  \centering
  \centerline{\includegraphics[scale=0.4]{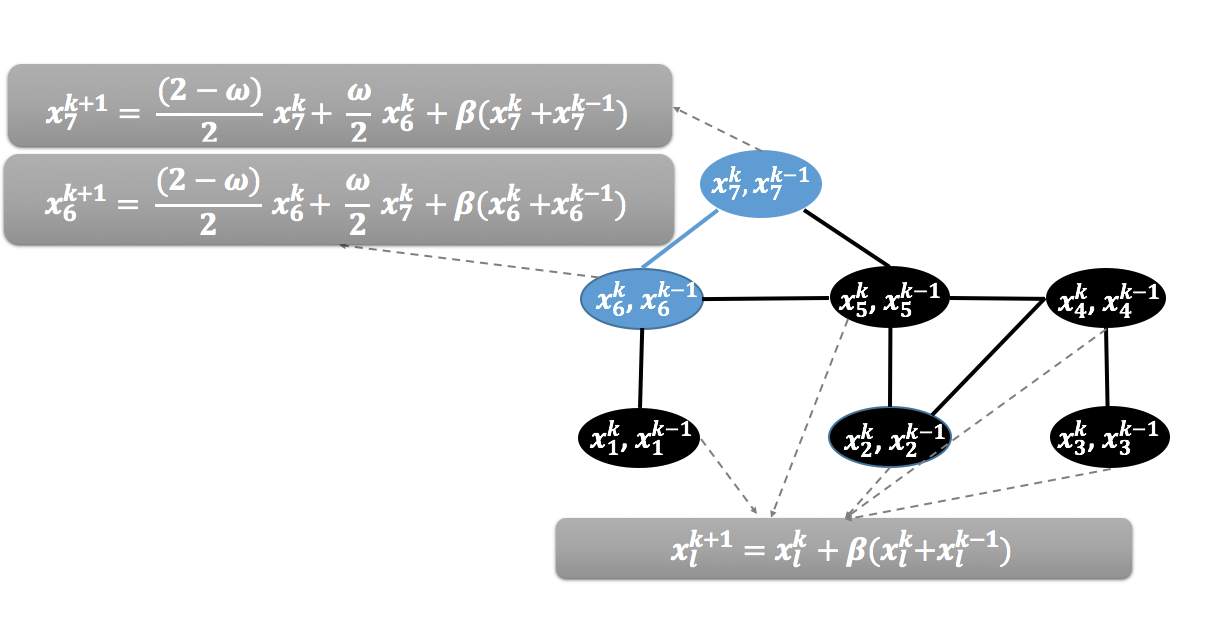}}
  \caption{\footnotesize Example of how mRK works as gossip algorithm. In the presented network the edge that connects nodes $6$ and $7$ is randomly selected. The pair of nodes exchange their information and update their values following the update rule of the Algorithm~\ref{RKmomentum} while the rest of the nodes, $\ell \in [5]$, update their values using only their own previous private values.}
  \label{fig:mRK}
\end{minipage}
\end{figure}

\subsection{Connection with the accelerated gossip algorithm}
\label{connectionOfAcceleratedMethods}
In the randomized gossip literature there in one particular method closely related to our approach. It was first proposed in \cite{cao2006accelerated} and its analysis under strong conditions was presented in \cite{liu2013analysis}.
In this paper local memory is exploited by installing shift registers at each agent. In particular we are interested in the case of just two registers where the first stores the agent's current value and the second the agent's value before the latest update. The algorithm can be described as follows. Suppose that edge $e=(i,j)$ is chosen at time $k$. Then,
\begin{itemize}
\item Node $i$: $x_i^{k+1}= \omega(\frac{x_i^k+x_j^k}{2})+(1-\omega)x_i^{k-1}$
\item Node $j$: $x_i^{k+1}=  \omega(\frac{x_i^k+x_j^k}{2})+(1-\omega)x_j^{k-1}$
\item Any other node $\ell$: $x_\ell^{k+1}=x_\ell^k$
\end{itemize}
where $\omega \in [1,2)$. The method was analyzed in \cite{liu2013analysis} under a strong assumption on the probabilities of choosing the pair of nodes that as the authors mentioned is unrealistic in practical scenarios, and for networks like the random geometric graphs. At this point we should highlight that the results presented in \cite{loizou2017momentum} hold for essentially any distribution $\cD$ and as a result such a problem cannot occur.

Note also that  if we choose $\beta=\omega-1$ in the update rule of Algorithm~\ref{RKmomentum}, then our method is simplified to:
\begin{itemize}
\item Node $i$: $x_i^{k+1}= \omega(\frac{x_i^k+x_j^k}{2})+(1-\omega)x_i^{k-1}$
\item Node $j$: $x_i^{k+1}=  \omega(\frac{x_i^k+x_j^k}{2})+(1-\omega)x_j^{k-1}$
\item Any other node $\ell$: $x_\ell^{k+1}=\omega x_\ell^k+(1-\omega)x_\ell^{k-1}$
\end{itemize}

In order to apply Theorem~\ref{L2}, we need to assume that $0< \omega < 2$ and $\beta=\omega-1 \geq 0$ which also means that $\omega \in [1,2)$. Thus for $\omega \in [1,2)$ and momentum parameter $\beta=\omega-1$ it is easy to see that our approach is very similar to the shift-register algorithm. Both methods update the selected pair of nodes in the same way. However, in our case the other nodes of the network  do not remain idle but instead also update their values using their own previous information.

Using the  momentum matrix $\bB=Diag(b_{11},b_{22},\dots, b_{nn})$, the two algorithms above can be expressed as:
\begin{equation}
\label{updatewithB}
x^{k+1} = x^k - \tfrac{\omega}{2} (x_i^k-x_j^k)(e_i-e_j) + \bB (x^k-x^{k-1}).
\end{equation}
In particular, in our algorithm every element on the diagonal is equal to $\beta=\omega-1$, while in \cite{cao2006accelerated} all values on the diagonal are zeros except for the two values $b_{ii}=b_{jj}=\omega-1$.

\begin{rem}
The shift register case and our algorithm can be seen as  two limit cases of the update rule \eqref{updatewithB}. In particular, the shift register method uses only two non-zero diagonal elements in $\bB$, while our method has a full diagonal. We believe that further methods can be developed in the future by exploring the cases where more than two but not all elements of the diagonal matrix $\bB$ are non-zero. It might be possible to obtain better convergence  if one carefully chooses these values based on the network topology. We leave this as an open problem for future research.
\end{rem}

\subsection{Randomized block Kaczmarz gossip with momentum}

Recall that Theorem~\ref{TheoremRBK} says how RBK (with no momentum and no relaxation) can be interpreted as a gossip algorithm. Now we  use this result to explain how relaxed RBK with momentum works. Note that the update rule of RBK with momentum can be rewritten as follows:
\begin{equation}
\label{updateRBK2}
x^{k+1} \overset{\eqref{momentumupdateb0}}{=} \omega (\bI- \bA^ \top \bH_k \bA) x^k+(1-\omega)x^k +\beta(x^k-x^{k-1}),
\end{equation}
where $(\bI - \bA^ \top \bH_k \bA) x^k$ is the update rule of  RBK   \eqref{updateSkProj}.

Thus, in analogy  to the simple RBK, in the $k^{th}$ step, a random set of edges is selected and $q \leq n$ connected components are formed as a result. This includes the connected components that belong to both sub-graph $\cG_k$ and also the singleton connected components (nodes outside the $\cG_k$). Let us define the set of the nodes that belong in the $r \in [q]$ connected component at the $k^{th}$ step $\cV_r^k$, such that $\cV= \cup_{r\in [q]} \cV_r^k$ and $|\cV|=\sum_{r=1}^{q} |\cV_r^k|$ for any $k>0$. 

Using the update rule \eqref{updateRBK2}, Algorithm~\ref{RBKmomentum} shows how mRBK is updating the private values of the nodes of the network (see also Figure~\ref{fig:RBK} for the graphical interpretation).

\begin{algorithm}[t!]
	\caption{Randomized Block Kaczmarz Gossip with momentum}
	\label{RBKmomentum}
	\small \small
	\begin{algorithmic}[1]
		\State {\bf Parameters:} Distribution $\mathcal{D}$ from which method samples matrices;  stepsize/relaxation parameter $\omega \in \R$;  heavy ball/momentum parameter $\beta$.
		\State {\bf Initialize:} $x^0,x^1 \in \R^n$
		\For{$k=1,2,...$} 
		\State Select a random set of edges $\cS \subseteq \cE$
		\State Form subgraph $\cG_k$ of $\cG$ from the selected  edges 
		\State Node values  are updated as follows:
\begin{itemize}
\item For each connected component $\cV_r^k$ of $\cG_k$, replace the values of its nodes with: 
\begin{equation}
\label{updateruelblock}
x_i^{k+1}=\omega \tfrac{\sum_{j \in \cV_r^k} x_j^{k}}{|\cV_r^k|} +(1-\omega)x_i^k+\beta (x_i^k-x_i^{k-1})
\end{equation}
\item Any other node $\ell$: $x_\ell^{k+1}=x_\ell^k+\beta (x_\ell^k - x_\ell^{k-1})$
\end{itemize}
		\EndFor
		\State {\bf Output:} The last iterate $x^k$
	\end{algorithmic}
\end{algorithm}

Note that in the update rule of mRBK the nodes that are not attached to a selected edge (do not belong in the sub-graph $\cG_k$) update their values via $x_\ell^{k+1}=x_\ell^k+\beta (x_\ell^k - x_\ell^{k-1})$. By considering these nodes as singleton connected components their update rule is exactly the same with the nodes of sub-graph $\cG_k$. This is easy to see as follows:
\begin{eqnarray}
x_\ell^{k+1}&=&\omega \tfrac{\sum_{j \in \cV_r^k} x_j^{k}}{|\cV_r^k|} +(1-\omega)x_\ell^k+\beta (x_\ell^k-x_\ell^{k-1})\notag\\
&=&\omega x_\ell^k +(1-\omega)x_\ell^k+\beta (x_\ell^k-x_\ell^{k-1})\notag\\
&=& x_\ell^k+\beta (x_\ell^k - x_\ell^{k-1}).
\end{eqnarray}

\begin{rem}
In the special case that only one edge is selected in each iteration ($\bS_k \in \R^{m \times 1}$)  the update rule of mRBK is simplified to the update rule of mRK. In this case the sub-graph $\cG_k$ is the pair of the two selected edges.  
\end{rem}

\begin{rem}
In \cite{LoizouRichtarik} it was shown that several existing gossip protocols for solving the average consensus problem are special cases of the simple RBK (Theorem~\ref{TheoremRBK}). For example two gossip algorithms that can be cast as special cases of the simple RBK are the path averaging proposed in \cite{benezit2010order} and the clique gossiping \cite{liu2017clique}. In  path averaging, in each iteration a path of nodes is selected and its nodes update their values to their exact average ($\omega=1$). In clique gossiping, the network is already divided into cliques and a through a random procedure a clique is activated and the nodes of it update their values to their exact average ($\omega=1$). Since mRBK contains simple RBK as a special case for $\beta=0$, we expect that these special protocols can  also be accelerated with the addition of momentum parameter $\beta \in (0,1)$.
\end{rem}

\begin{figure}[t!]
\begin{minipage}[b]{1.0\linewidth}
  \centering
  \centerline{\includegraphics[scale=0.41]{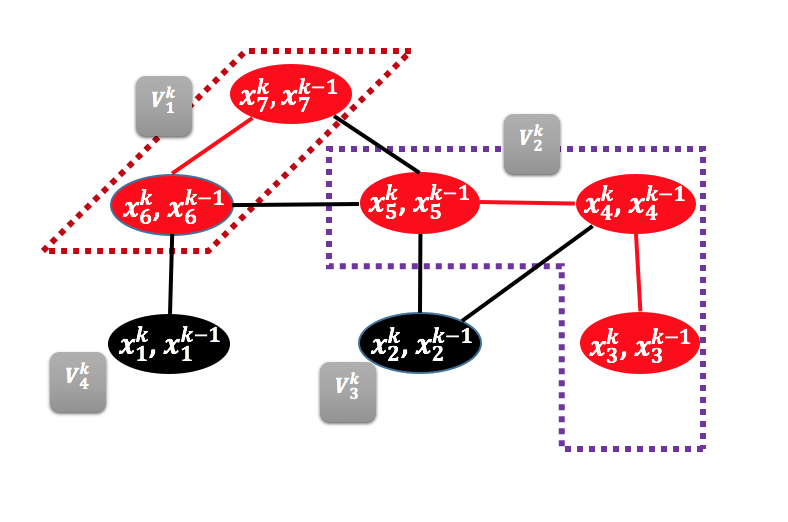}}
  \caption{\footnotesize Example of how the mRBK method works as gossip algorithm. In the presented network in the $k^{th}$ iteration the red edges are randomly chosen and they form subgraph $\cG_k$(from the red edges) and also four connected component. In this figure $V_1^k$ and $V_2^k$ are the two connected components that belong in the subgraph $\cG_k$ while $V_3^k$ and $V_4^k$ are the singleton connected components. Then the nodes update their values by communicate with the other nodes of their connected component using the update rule \eqref{updateruelblock}. For example the node number 5 that belongs in the connected component $V_2^k$ will update its value using the values of node 4 and 3 that also belong in the same component as follows:
$x_5^{k+1}=\omega \tfrac{x_3^k+x_4^k+x_5^k}{3} +(1-\omega)x_5^k+\beta (x_5^k-x_5^{k-1})$.}
  \label{fig:RBK}
\end{minipage}
\end{figure}

\subsection{Mass preservation}
One of the key properties of some of the most efficient randomized gossip algorithms is mass preservation. If a gossip algorithm has this property it means that the sum (and as a result the average) of the private values of the nodes remains fixed during the iterative procedure. That is, $\textstyle \sum_{i=1}^{n}x_i^{k}=\sum_{i=1}^{n}x_i^{0}, \quad \forall k \geq 1.$ The original pairwise gossip algorithm proposed in \cite{boyd2006randomized} satisfied the mass preservation property, while exisiting accelerated gossip algorithms \cite{cao2006accelerated,liu2013analysis}  preserving a scaled sum.  

In this section we show that the two proposed protocols presented above also have a mass preservation property. In particular, we prove mass preservation for the case of the block randomized gossip protocol (Algorithm~\ref{RBKmomentum}) with momentum. This is sufficient since the Kaczmarz gossip with momentum (mRK) can be cast as special case.

\begin{thm}
Assume that $x^0=x^1$. That is, the two registers of each node have the same initial value.  Then for the Algorithms~\ref{RKmomentum} and \ref{RBKmomentum} we have $\sum_{i=1}^{n}x_i^k=\sum_{i=1}^{n}c_i$ for any $k\geq 0$ and as a result, $\tfrac{1}{n}\sum_{i=1}^{n}x_i^k=\bar{c}$. 
\end{thm}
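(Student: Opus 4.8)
The plan is to work directly with mRBK (Algorithm~\ref{RBKmomentum}), since mRK is its special case when a single edge is selected, and to track only the scalar quantity $S^k \eqdef \sum_{i=1}^n x_i^k$. First I would show that the averaging portion of the block update preserves the sum within each connected component. Fix an iteration $k$ and a connected component $\cV_r^k$ of $\cG_k$. Summing the update rule \eqref{updateruelblock} over $i \in \cV_r^k$ and using that $\sum_{i \in \cV_r^k} \tfrac{\sum_{j \in \cV_r^k} x_j^k}{|\cV_r^k|} = \sum_{j \in \cV_r^k} x_j^k$, the $\omega$ and $(1-\omega)$ terms recombine and the component sum satisfies
\begin{equation*}
\sum_{i \in \cV_r^k} x_i^{k+1} = \sum_{i \in \cV_r^k} x_i^{k} + \beta\Big(\sum_{i \in \cV_r^k} x_i^{k} - \sum_{i \in \cV_r^k} x_i^{k-1}\Big).
\end{equation*}
Crucially, the relaxation parameter $\omega$ drops out entirely, so this identity holds regardless of $\omega$. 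Since the singleton nodes outside $\cG_k$ obey the same rule (as verified in the displayed computation preceding Remark for mRBK), the components $\{\cV_r^k\}_{r \in [q]}$ partition $\cV$, and I can sum the above over all $r \in [q]$.

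Summing over the partition yields the key scalar recursion
\begin{equation*}
S^{k+1} = S^k + \beta\,(S^k - S^{k-1}), \qquad k \geq 1,
\end{equation*}
which has decoupled completely from the individual node values. At this point the result follows by a straightforward induction. The base case uses the hypothesis $x^0 = x^1$, giving $S^1 = S^0 = \sum_{i=1}^n c_i$ (recall $x^0$ stores the private values $c_i$). For the inductive step, if $S^k = S^{k-1}$, then the increment $\beta(S^k - S^{k-1})$ vanishes and $S^{k+1} = S^k$; hence $S^k = S^0 = \sum_{i=1}^n c_i$ for every $k \geq 0$, and dividing by $n$ gives $\tfrac{1}{n}\sum_{i=1}^n x_i^k = \bar c$.

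The only genuinely substantive step is the first one: recognizing that the weighted-average block update is sum-preserving \emph{within each connected component} and that this property is independent of $\omega$. Everything downstream---collapsing to the scalar recursion via the partition of $\cV$ and the two-line induction---is routine once that cancellation is in hand. I expect no real obstacle beyond being careful that the component sums telescope correctly and that the initialization $x^0 = x^1$ is exactly what kills the momentum term at every step.
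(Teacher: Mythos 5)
Your proposal is correct and follows essentially the same route as the paper's own proof: summing the block update \eqref{updateruelblock} over each connected component so that the $\omega$-terms cancel, summing over the partition $\cV = \cup_{r\in[q]} \cV_r^k$ to obtain the scalar recursion $S^{k+1} = (1+\beta)S^k - \beta S^{k-1}$, and closing with the initialization $x^0 = x^1$. The only cosmetic difference is that you spell out the induction that the paper compresses into ``and as a result.''
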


\begin{proof}
We prove the result for the more general Algorithm~\ref{RBKmomentum}. Assume that in the $k^{th}$ step of the method $q$ connected components are formed.  Let the set of the nodes of each connected component be $\cV_r^k$ so that $\cV= \cup_{r=\{1,2,...q\}} \cV_r^k$ and $|\cV|=\sum_{{r}=1}^{q} |\cV_r^k|$ for any $k>0$.  Thus:
\begin{equation}
\label{generalsum}
\textstyle \sum_{i=1}^{n}x_i^{k+1}=\sum_{i \in \cV_1^k} x_i^{k+1} +\dots + \sum_{i \in \cV_q^k} x_i^{k+1}
\end{equation}
Let us first focus, without loss of generality, on  connected component $r \in [q]$ and simplify the expression for the sum of its nodes:
$ \sum_{i\in \cV_r^k} x_i^{k+1}
\overset{\eqref{updateruelblock}}= \textstyle \sum_{i \in \cV_r^k} \omega \tfrac{\sum_{j \in \cV_r^k} x_j^{k}}{|\cV_r^k|} +   (1-\omega) \sum_{i \in \cV_r^k} x_i^k  +\beta \sum_{i \in \cV_r^k}  (x_i^k-x_i^{k-1})
 =|\cV_r^k| \tfrac{\omega \sum_{j \in \cV_r^k} x_j^{k}}{|\cV_r^k|}+ (1-\omega) \sum_{i \in \cV_r^k} x_i^k 
 +\beta \sum_{i \in \cV_r^k}  (x_i^k-x_i^{k-1})
  =(1+\beta) \sum_{i \in \cV_r^k}x_i^k-\beta \sum_{i \in \cV_r^k}x_i^{k-1}
$. By substituting this for all $r \in [q]$ into the right hand side of \eqref{generalsum} and from the fact that $\cV= \cup_{r\in [q]}
 \cV_r^k$, we get
$ \sum_{i=1}^{n}x_i^{k+1}
= (1+\beta) \sum_{i=1}^{n}x_i^k-\beta \sum_{i=1}^{n} x_i^{k-1}.
$
Since $x^0=x^1$, we have $\sum_{i=1}^{n}x_i^{0}=\sum_{i=1}^{n}x_i^{1}$, and as a result $
\sum_{i=1}^{n}x_i^{k}
= \sum_{i=1}^{n}x_i^{0}$ for all $ k \geq 0$.
\end{proof}

\section{Numerical Evaluation}
\label{experiments}
We devote this section to experimentally evaluate the performance of the proposed gossip algorithms: mRK and mRBK. In particular we perform three experiments. In the first two we focus on the performance of the mRK, while in the last one on its block variant mRBK.
In comparing the methods with their momentum variants we use the relative error measure $\|x^k-x^*\|^2 / \|x^0-x^*\|^2 $ where the starting vectors of values $x^0=x^1=c$ are taken to be always Gaussian vectors.  For all of our experiments the horizontal axis represents the number of iterations. The networks used in the experiments are the cycle (ring graph), the 2-dimension grid and the randomized geometric graph (RGG) with radius $r=\sqrt{\log(n)/n}$. Code was written in Julia 0.6.3. 



\subsection{Impact of momentum parameter on mRK}
Recall that in the simple pairwise gossip algorithm the two nodes that exchange information update their values to their exact average while all the other nodes remain idle. In our framework this method can be cast as special case of mRK when $\beta=0$ and $\omega=1$. In this experiment we keep always the stepsize to be $\omega=1$ which means that the pair of the chosen nodes update their values to their exact average. We show that by choosing a suitable momentum parameter $\beta \in (0,1)$ we can have faster convergence for all networks under study. See Figure~\ref{mRKomega1} for more details.

\begin{figure}[t!]
\centering
\begin{subfigure}{.25\textwidth}
  \centering
  \includegraphics[width=1\linewidth]{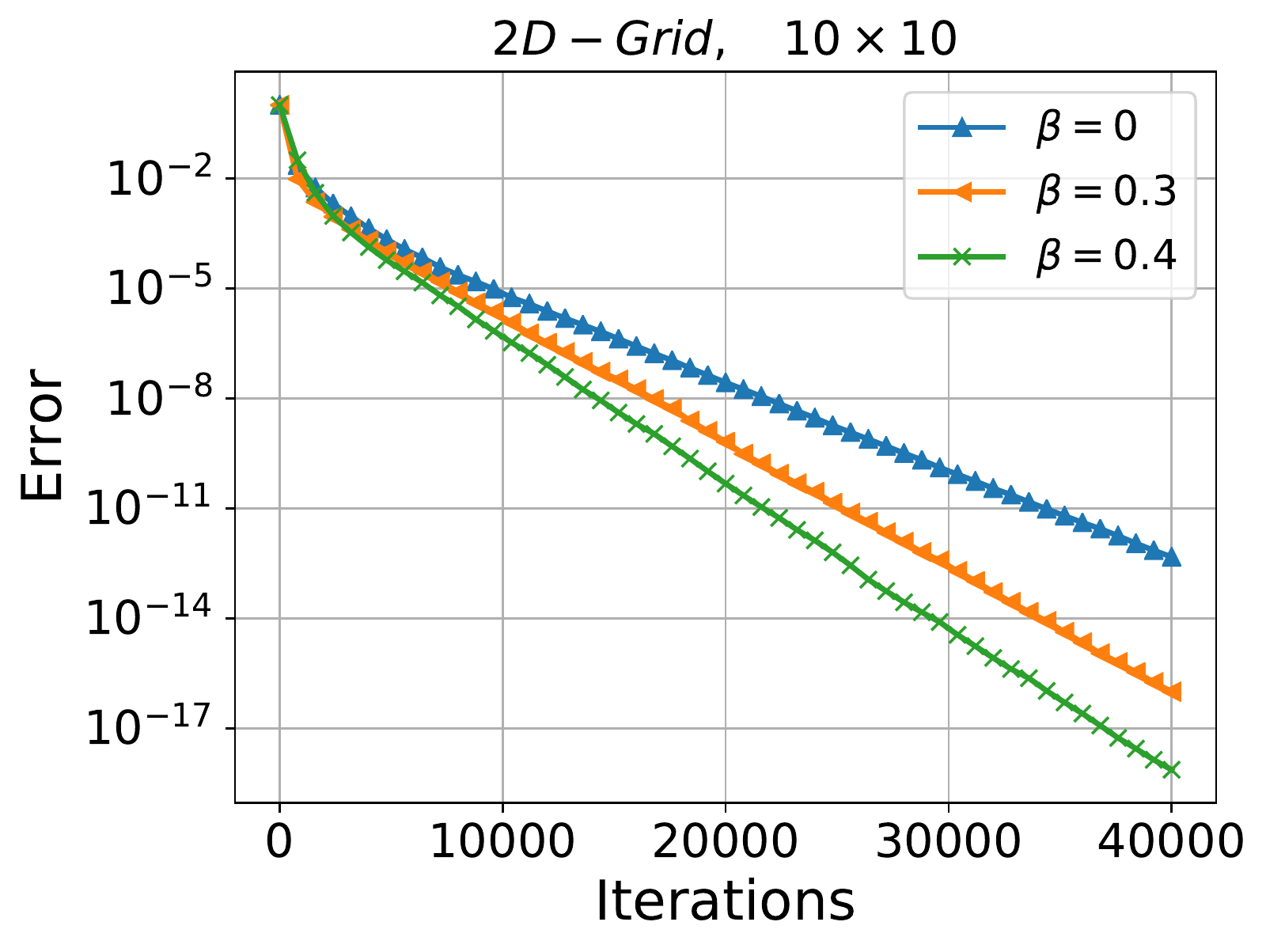}
  \label{fig:sub1}
\end{subfigure}%
\begin{subfigure}{.25\textwidth}
  \centering
  \includegraphics[width=1\linewidth]{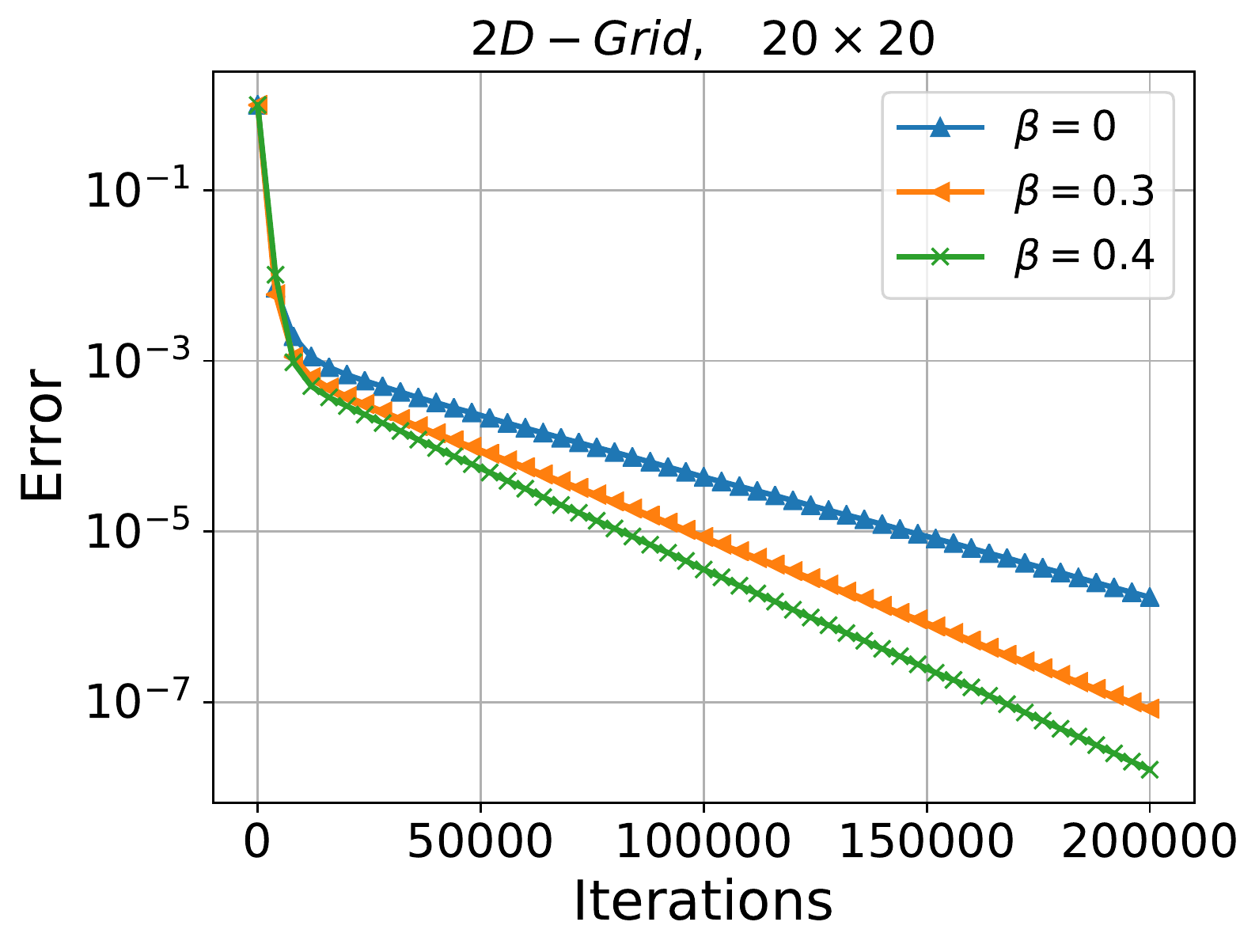}
  \label{fig:sub2}
\end{subfigure}\\
\begin{subfigure}{.25\textwidth}
  \centering
  \includegraphics[width=1\linewidth]{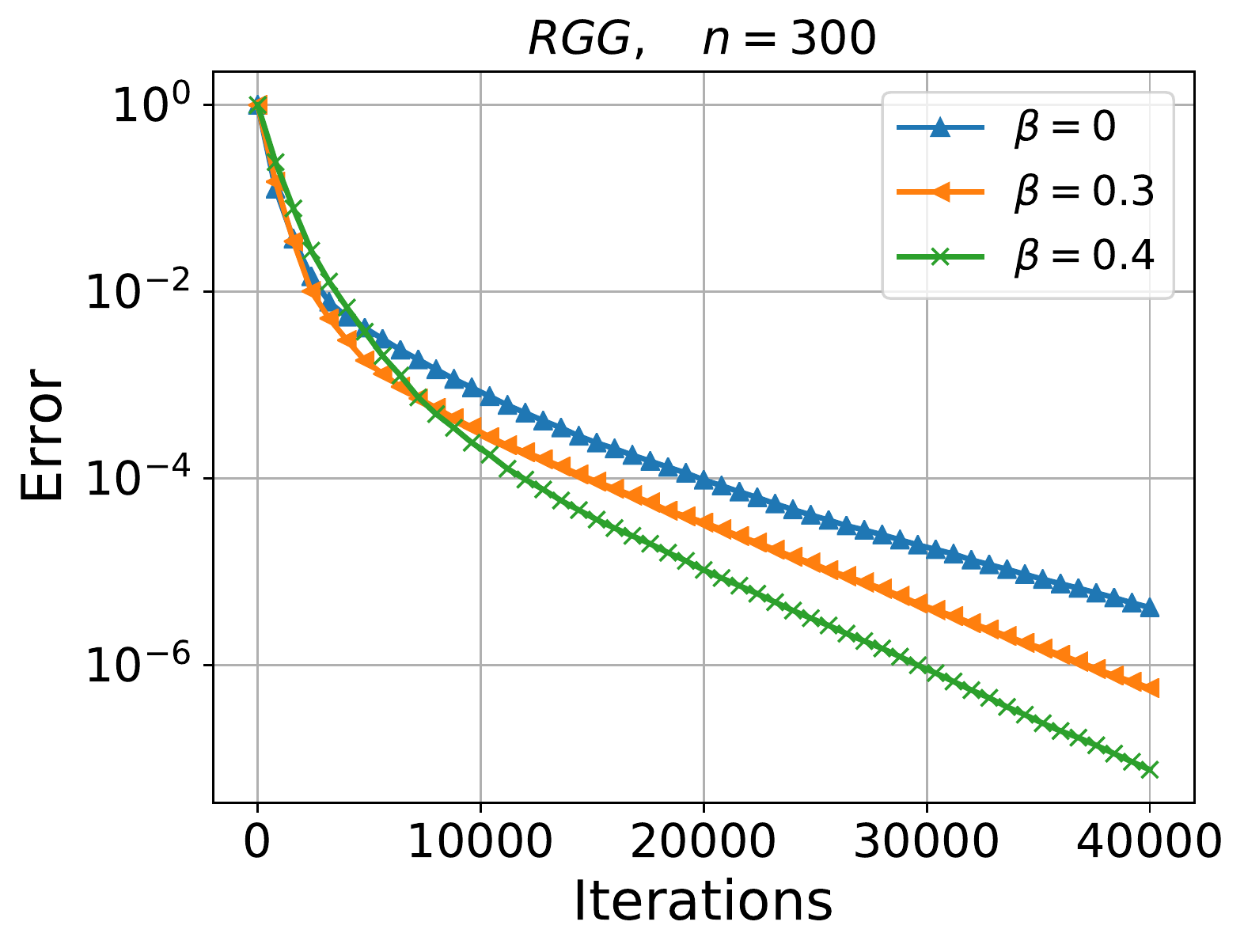}
  \label{fig:sub1}
\end{subfigure}%
\begin{subfigure}{.25\textwidth}
  \centering
  \includegraphics[width=1\linewidth]{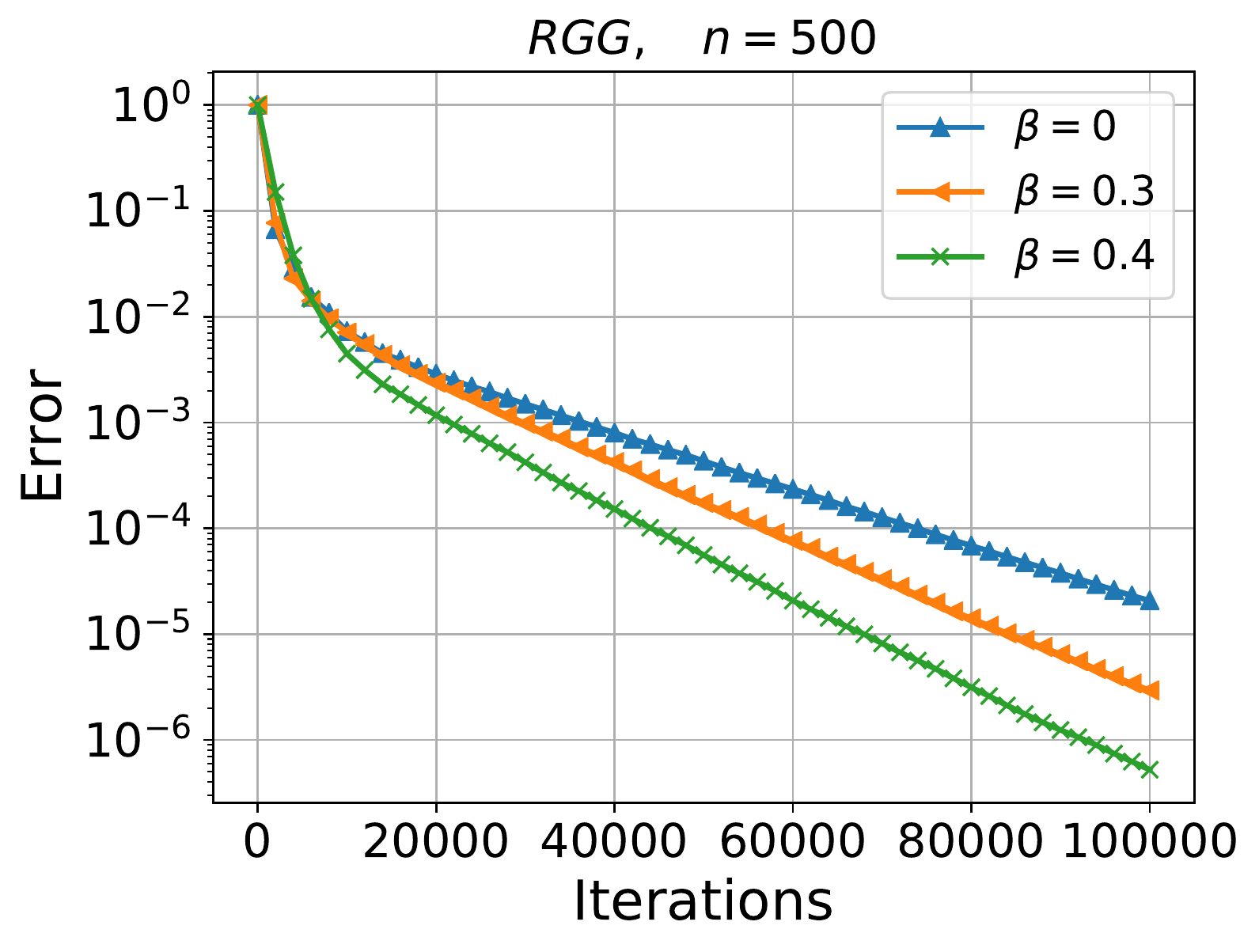}
  \label{fig:sub2}
\end{subfigure}\\
\begin{subfigure}{.25\textwidth}
  \centering
  \includegraphics[width=1\linewidth]{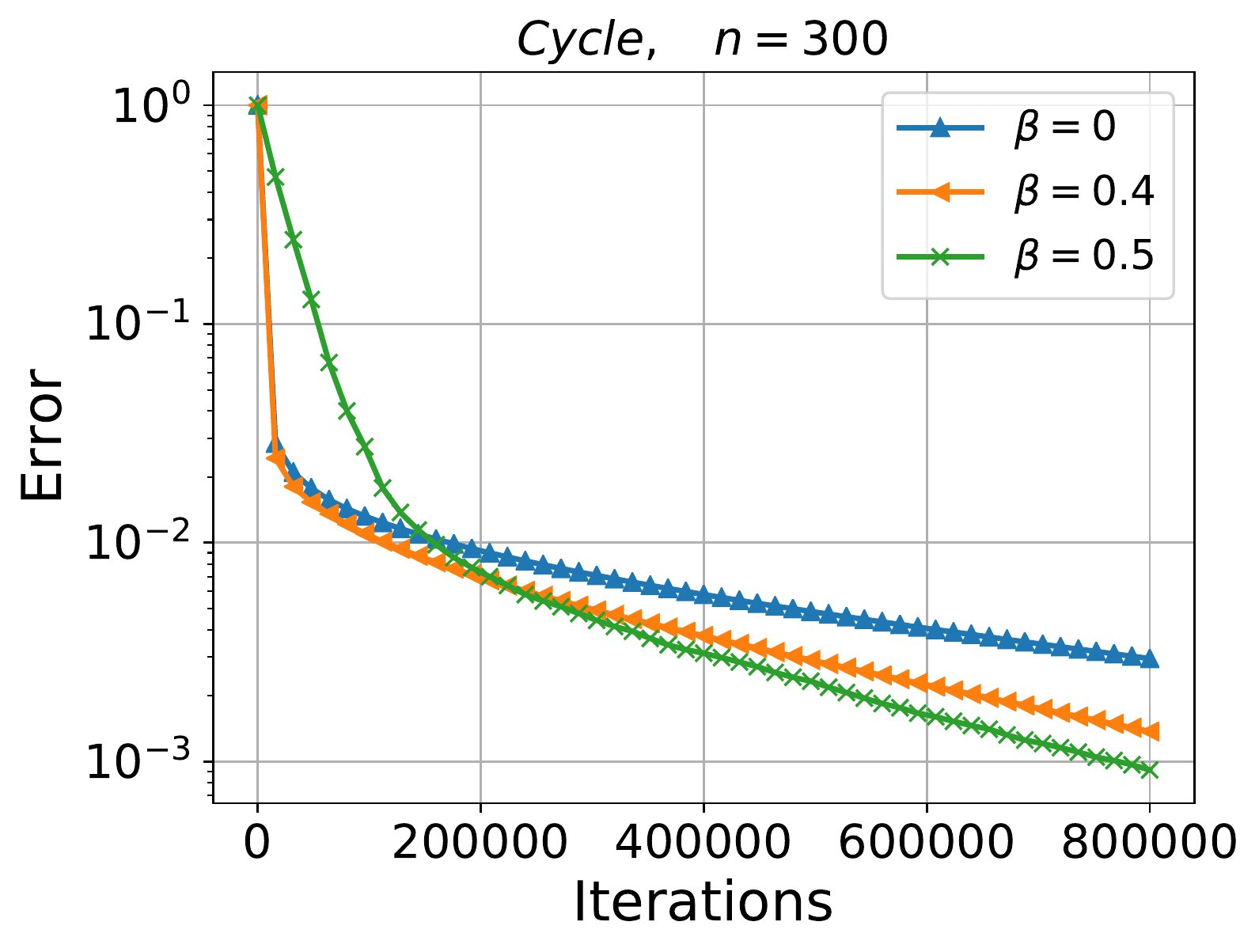}
\end{subfigure}%
\begin{subfigure}{.25\textwidth}
  \centering
  \includegraphics[width=1\linewidth]{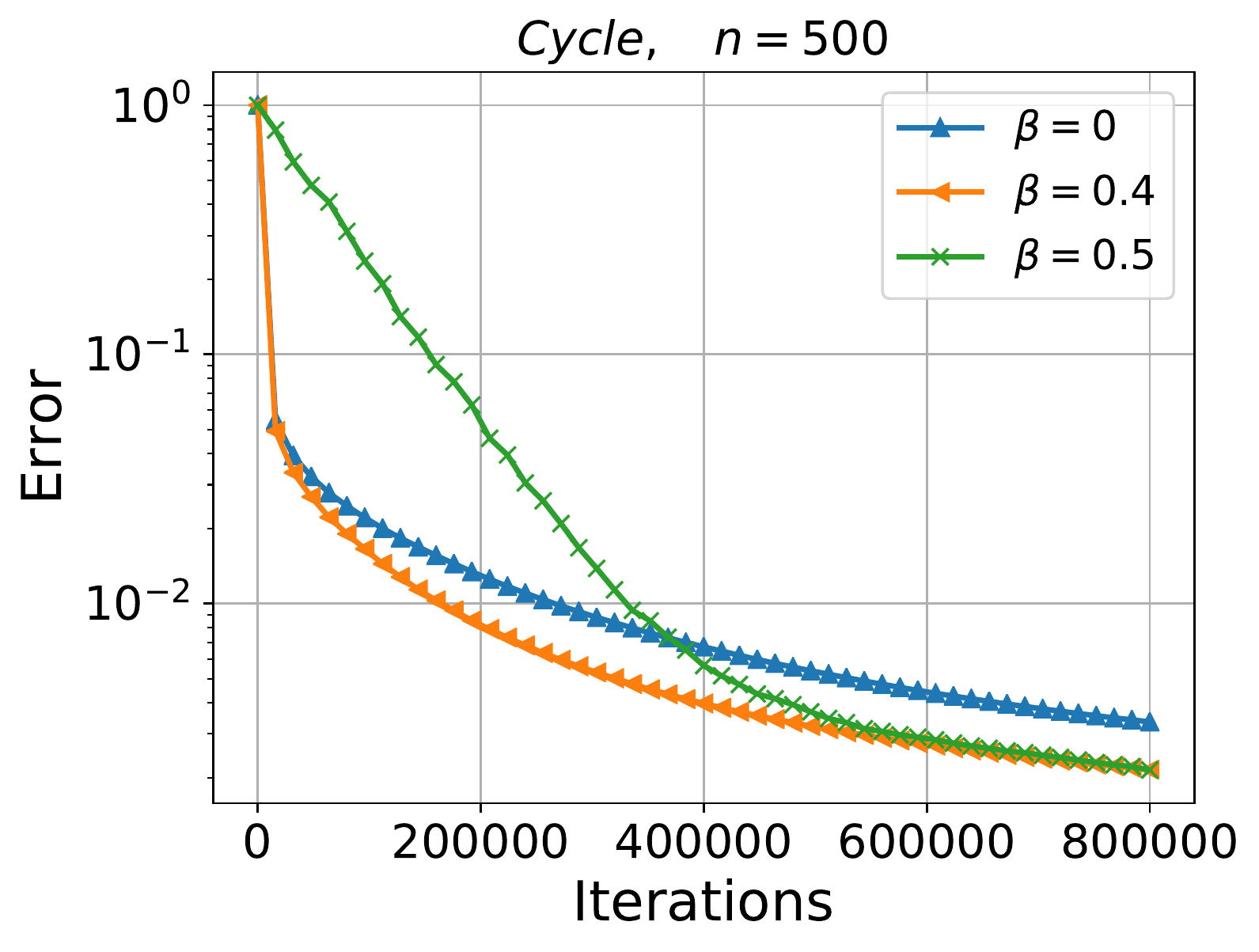}
\end{subfigure}
\caption{\footnotesize Performance of mRK for fixed step-size $\omega=1$ and several momentum parameters $\beta$ in a cycle, 2-dimension grid and RGG. The choice $\beta=0$ corresponds to the randomized pairwise gossip algorithm proposed in \cite{boyd2006randomized}; The $n$ in the title of each plot indicates the number of nodes of the network. For the grid graph this is $n \times n$.}
\label{mRKomega1}
\end{figure}

\subsection{Comparison with the Shift-Register}
In this experiment we compare mRK with the shift register case when we choose the $\omega$ and $\beta$ in such a way in order to satisfy the connection establish in Section~\ref{connectionOfAcceleratedMethods}. That is, we choose $\beta=\omega-1$ for any choice of $\omega \in (1,2)$. Observe that in all plots of Figure~\ref{shiftregister} our algorithm outperform the corresponding shift-register case. 
\begin{figure}[t!]
\centering
\begin{subfigure}{.23\textwidth}
  \centering
  \includegraphics[width=1\linewidth]{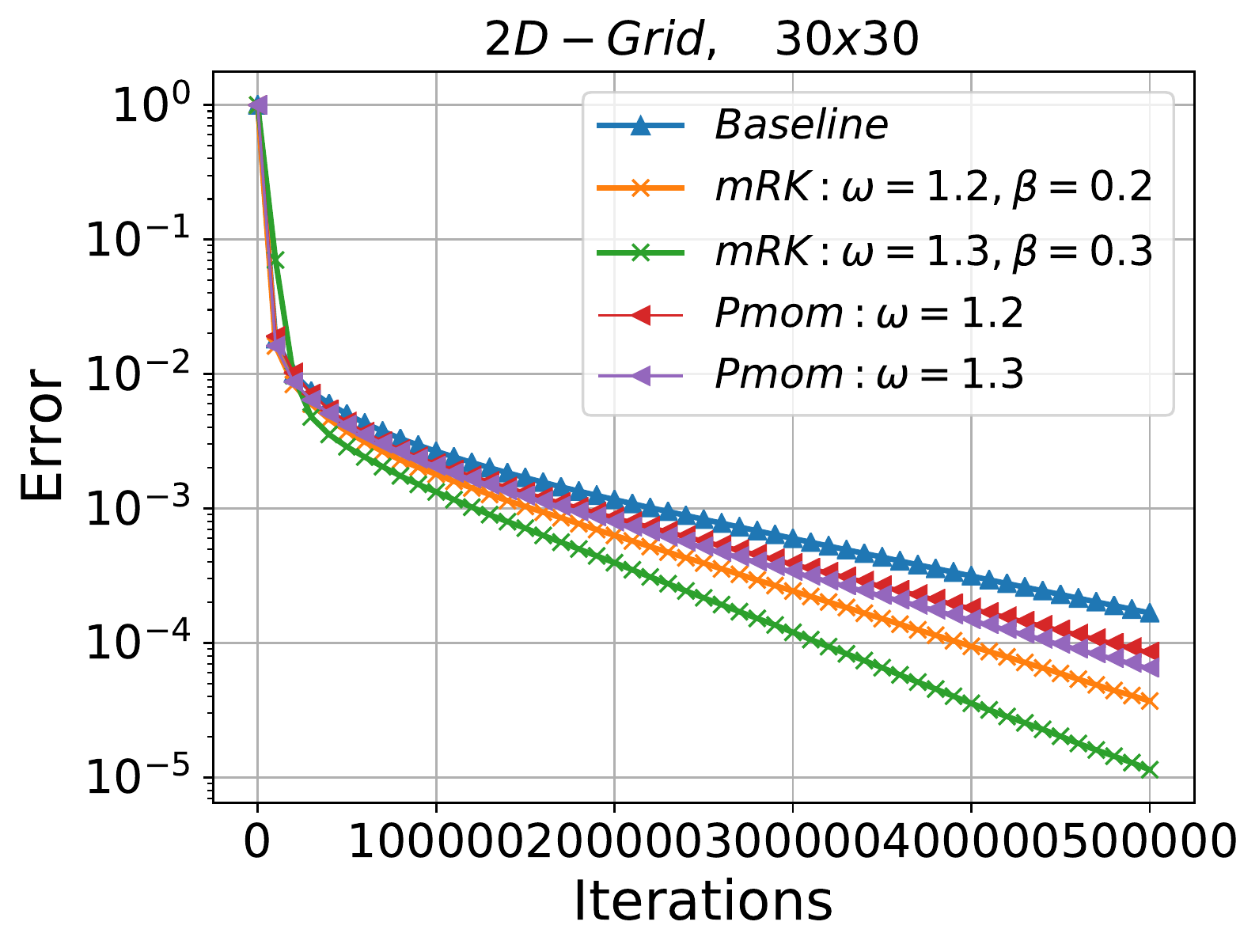}
\end{subfigure}
\begin{subfigure}{.23\textwidth}
  \centering
  \includegraphics[width=1\linewidth]{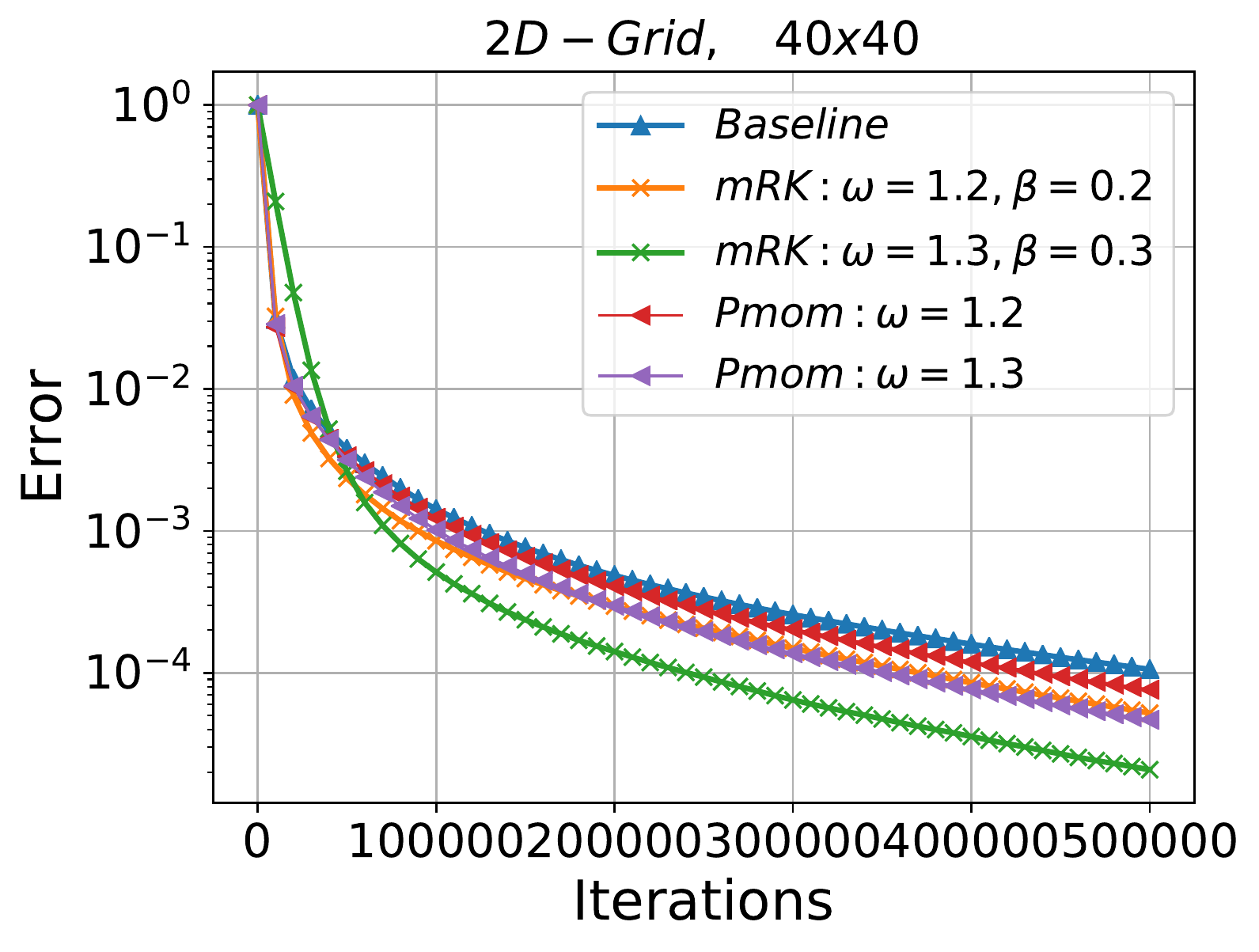}
\end{subfigure}\\
\begin{subfigure}{.23\textwidth}
  \centering
  \includegraphics[width=1\linewidth]{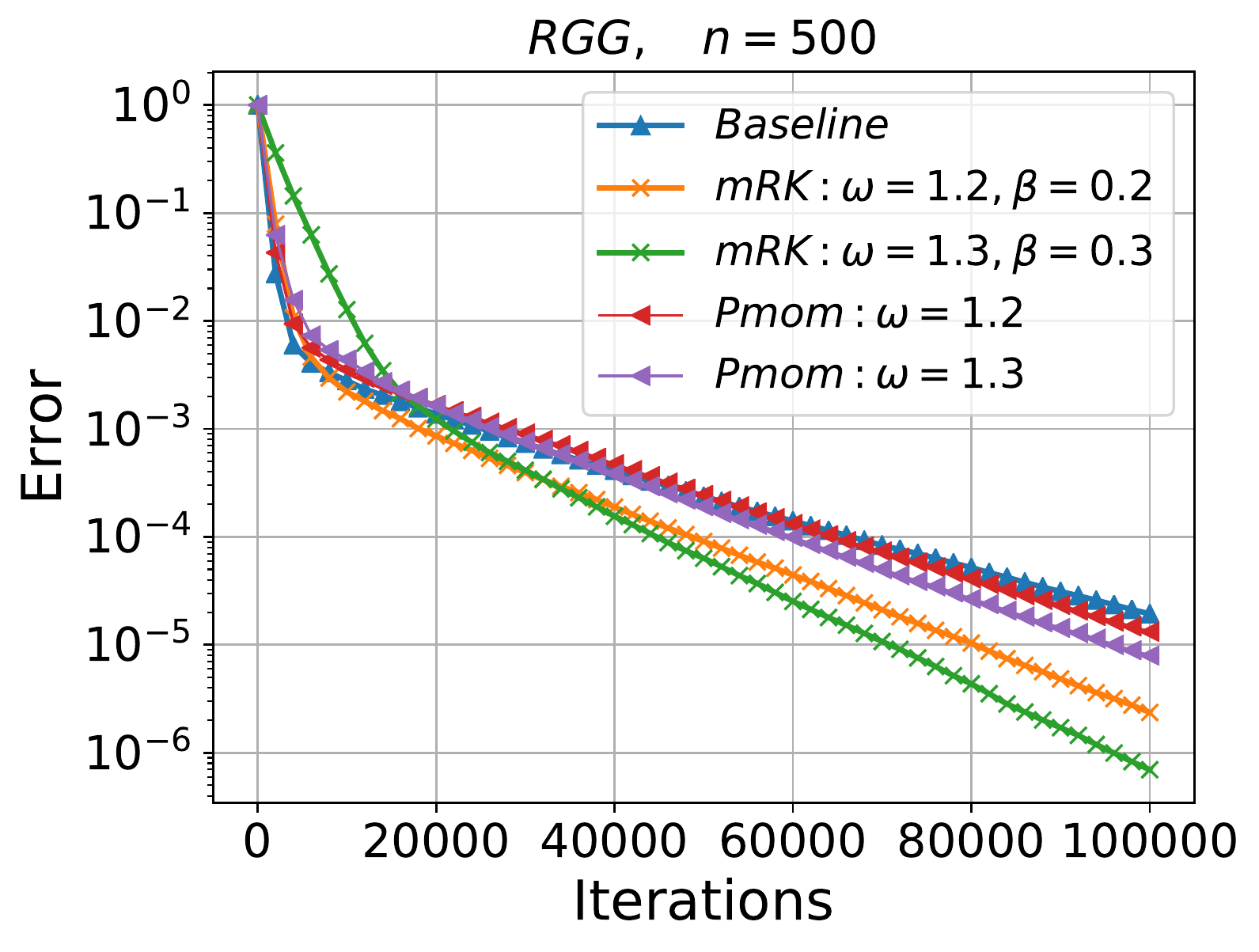}
\end{subfigure}%
\begin{subfigure}{.23\textwidth}
  \centering
  \includegraphics[width=1\linewidth]{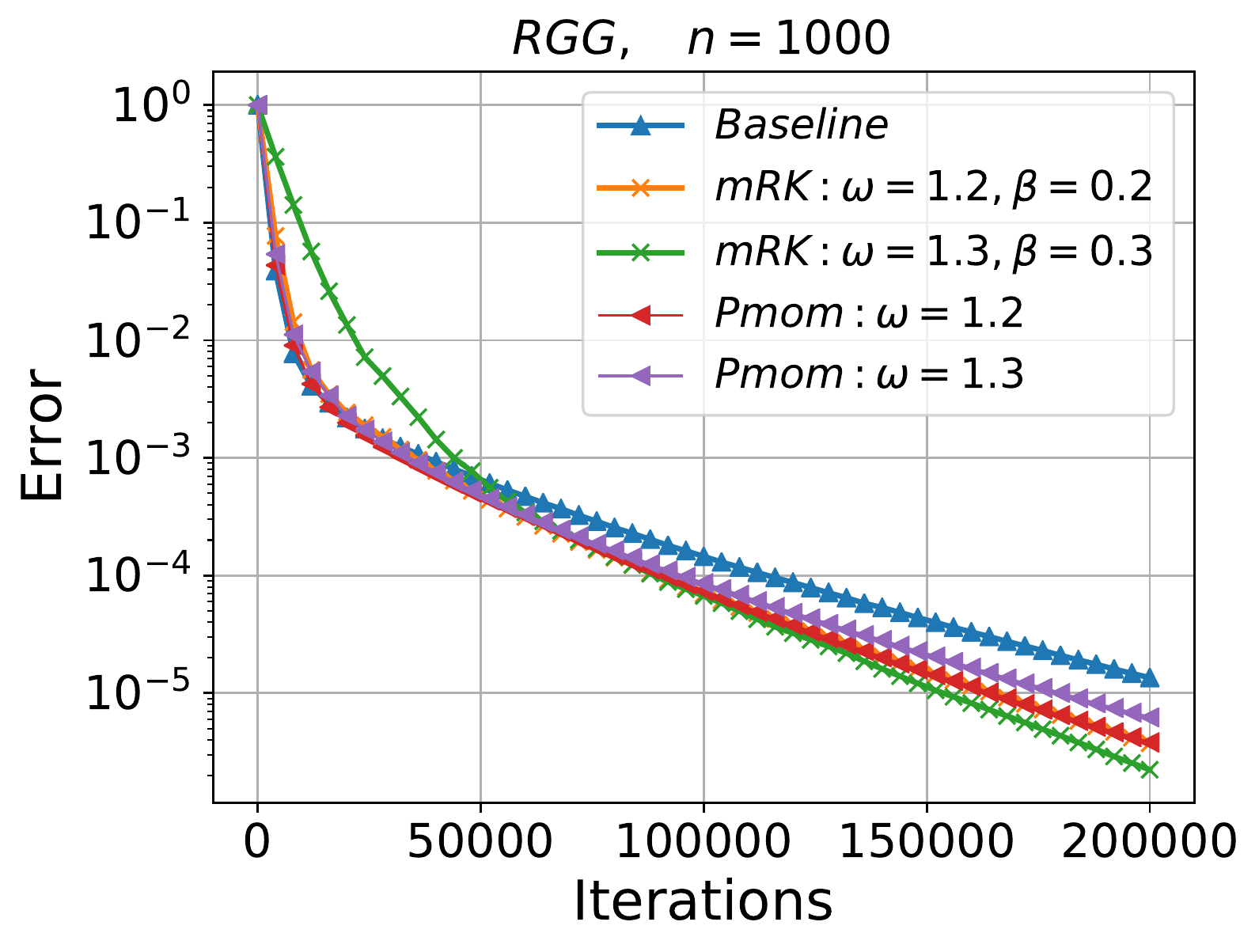}
\end{subfigure}\\
\begin{subfigure}{.23\textwidth}
  \centering
  \includegraphics[width=1\linewidth]{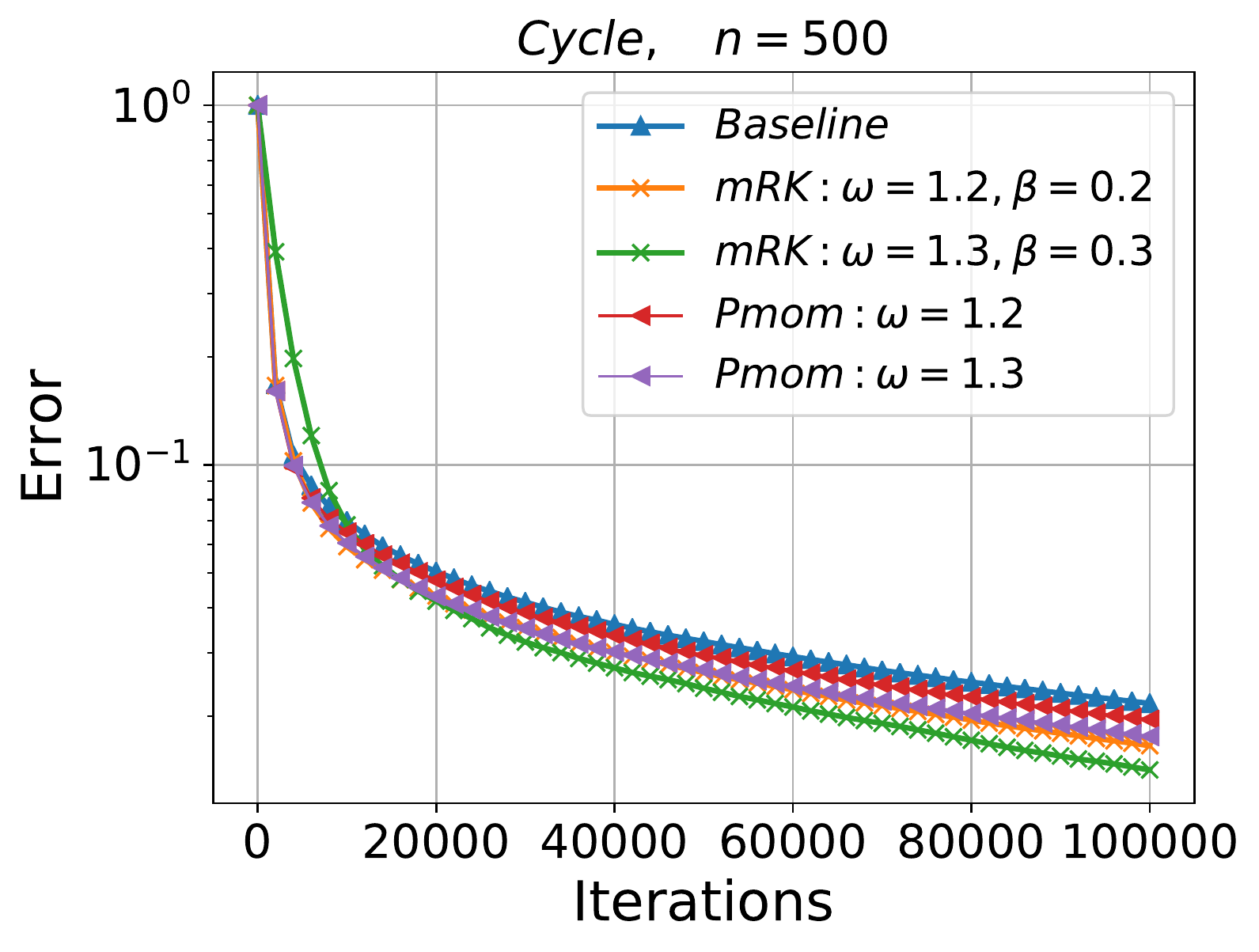}
\end{subfigure}
\begin{subfigure}{.23\textwidth}
  \centering
  \includegraphics[width=1\linewidth]{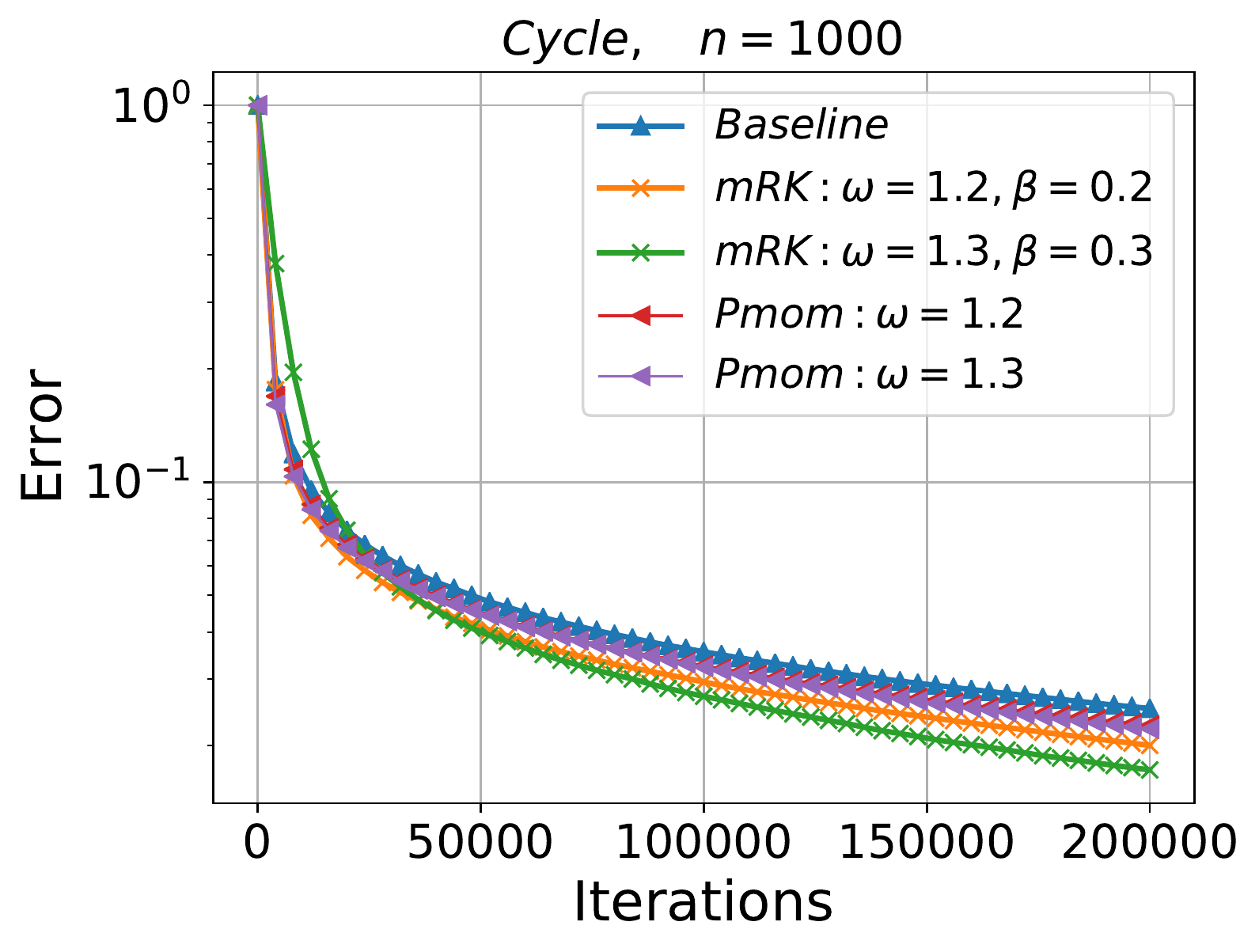}
\end{subfigure}%
\caption{\footnotesize Comparison of mRK with the pairwise momentum method (Pmom),  shift-register algorithm proposed in \cite{liu2013analysis}. For fair comparison we take always $\beta=\omega-1$ for our algorithm and the stepsizes are chosen to be either $\omega= 1.2$ or $\omega=1.3$.  The baseline method is the simple not accelerated randomized pairwise gossip algorithm from \cite{boyd2006randomized}. The $n$ in the title of each plot indicates the number of nodes of the network. For the grid graph this is $n \times n$.}
\label{shiftregister}
\end{figure}

\subsection{Impact of momentum parameter on mRBK}
In this experiment our goal is to show that the addition of momentum accelerates the RBK gossip algorithm proposed in \cite{LoizouRichtarik}. Without loss of generality we choose the block size to be always equal to $\tau=5$. That is the random matrix $\bS_k\sim \cD$ in the update rule of mRBK is always a $m \times 5$ column submatrix of the indetity $m \times m$ matrix. Thus, in each iteration $5$ edges of the network are chosen to form the subgraph $\cG_k$ and the values of the nodes are updated according to Algorithm~\ref{RBKmomentum}. Note that similar plots can be obtained for any choice of block size. We run all algorithms with fixed stepsize $\omega=1$. It is obvious that by choosing a suitable momentum parameter $\beta \in (0,1)$ we have faster convergence than when $\beta =0$, for all networks under study. See Figure~\ref{RBKfigures} for more details.

\begin{figure}[t!]
\centering
\begin{subfigure}{.25\textwidth}
  \centering
  \includegraphics[width=1\linewidth]{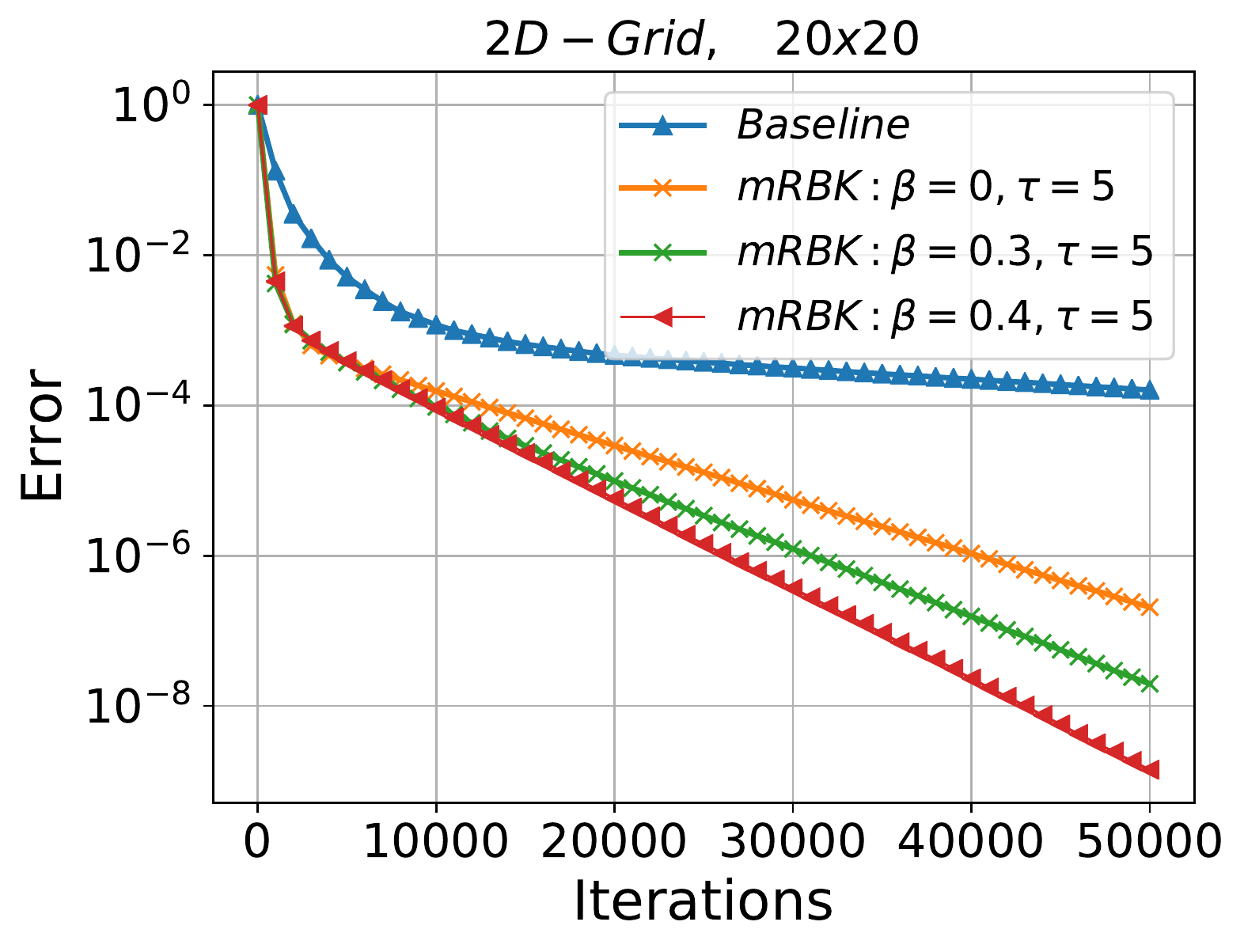}
\end{subfigure}%
\begin{subfigure}{.25\textwidth}
  \centering
  \includegraphics[width=1\linewidth]{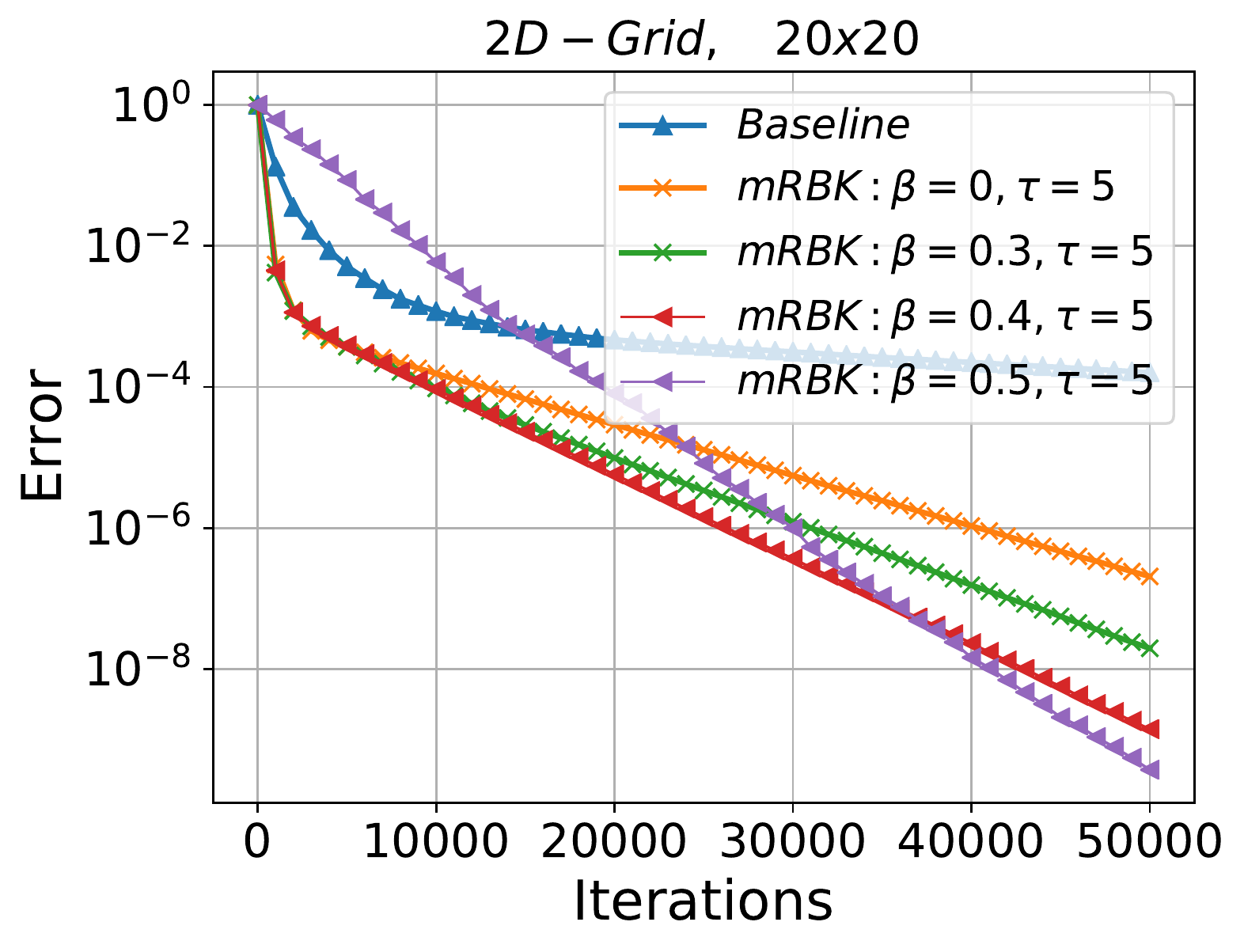}
\end{subfigure}\\
\begin{subfigure}{.25\textwidth}
  \centering
  \includegraphics[width=1\linewidth]{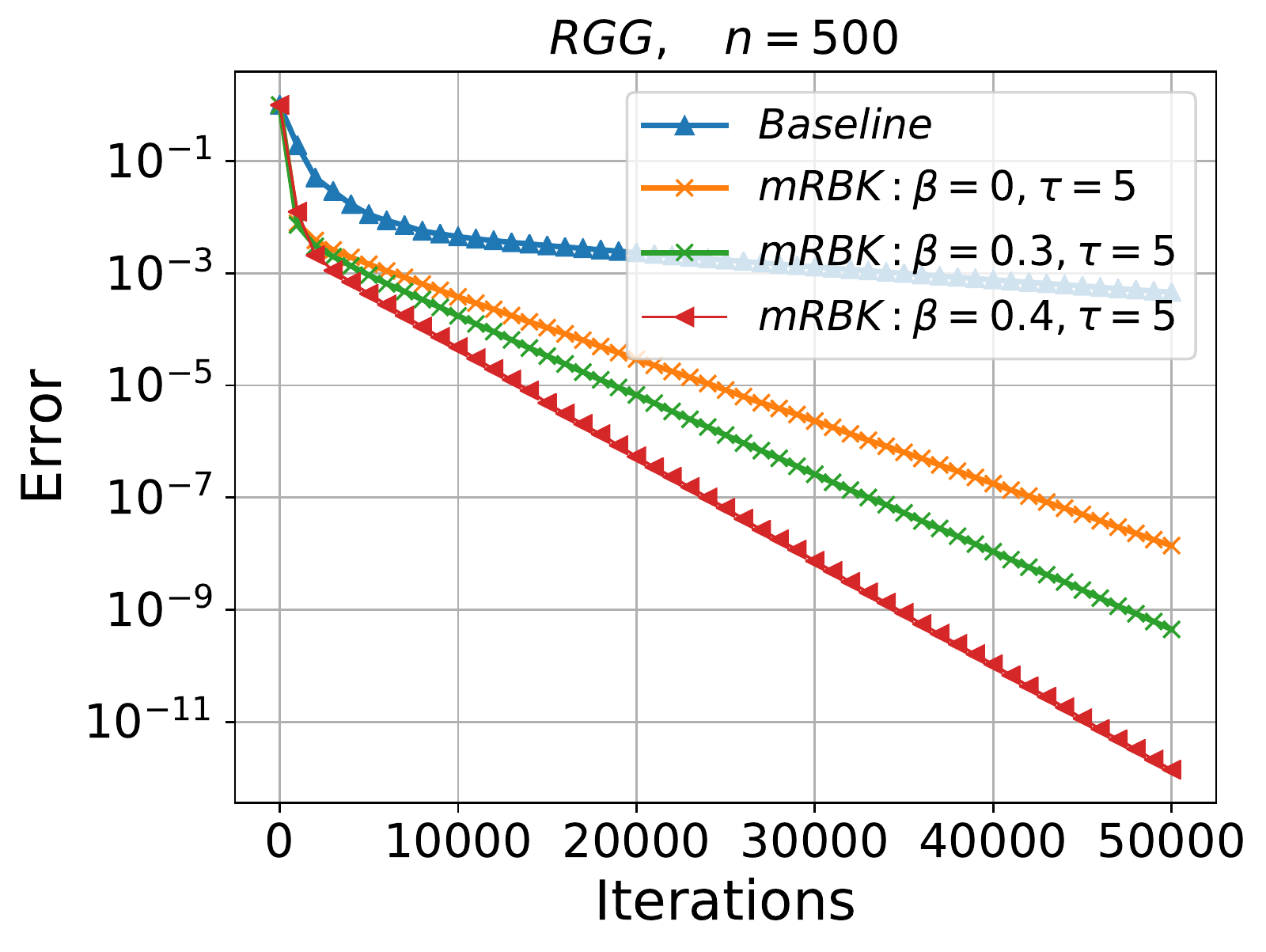}
\end{subfigure}%
\begin{subfigure}{.25\textwidth}
  \centering
  \includegraphics[width=1\linewidth]{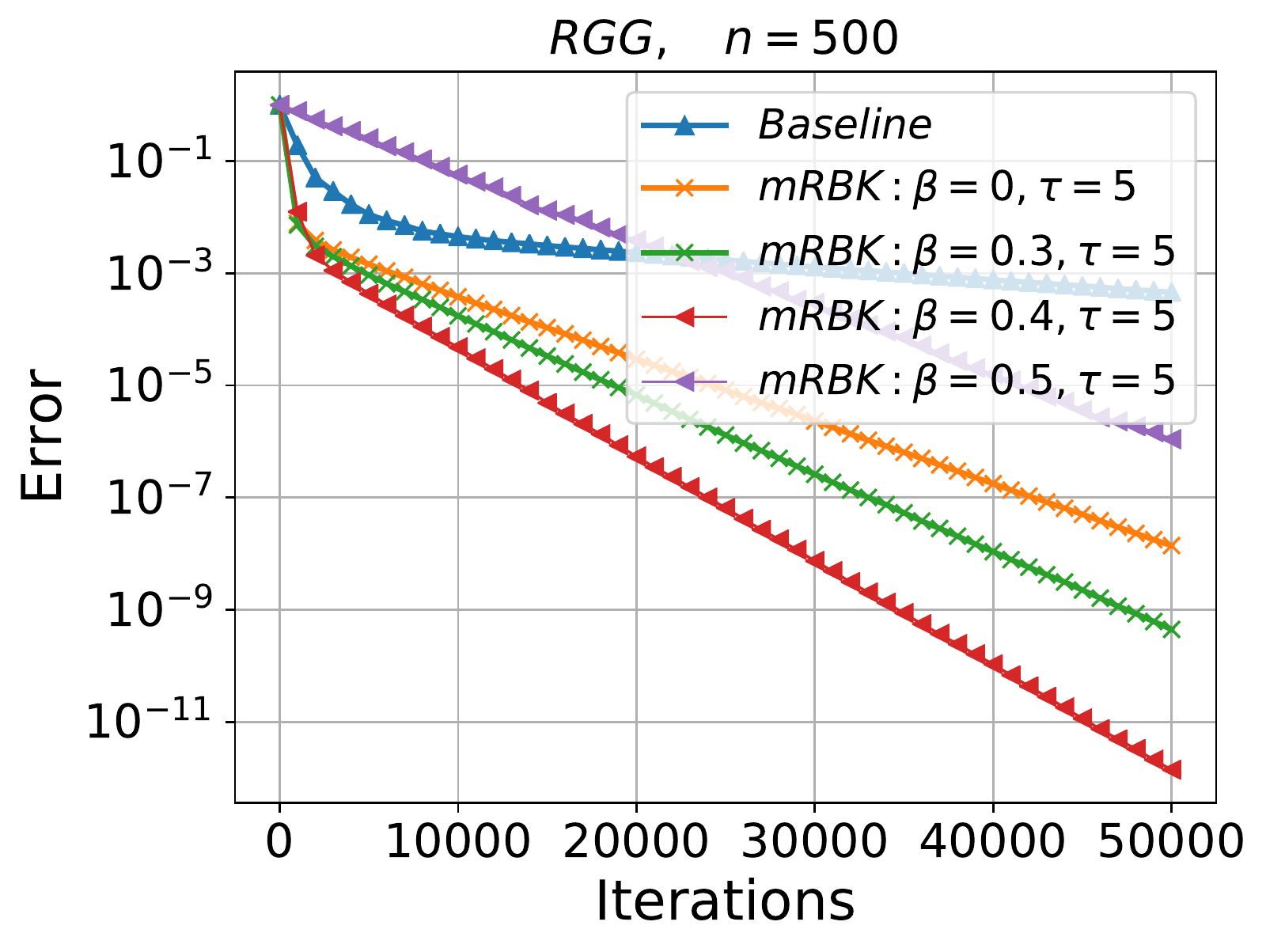}
\end{subfigure}\\
\begin{subfigure}{.25\textwidth}
  \centering
  \includegraphics[width=1\linewidth]{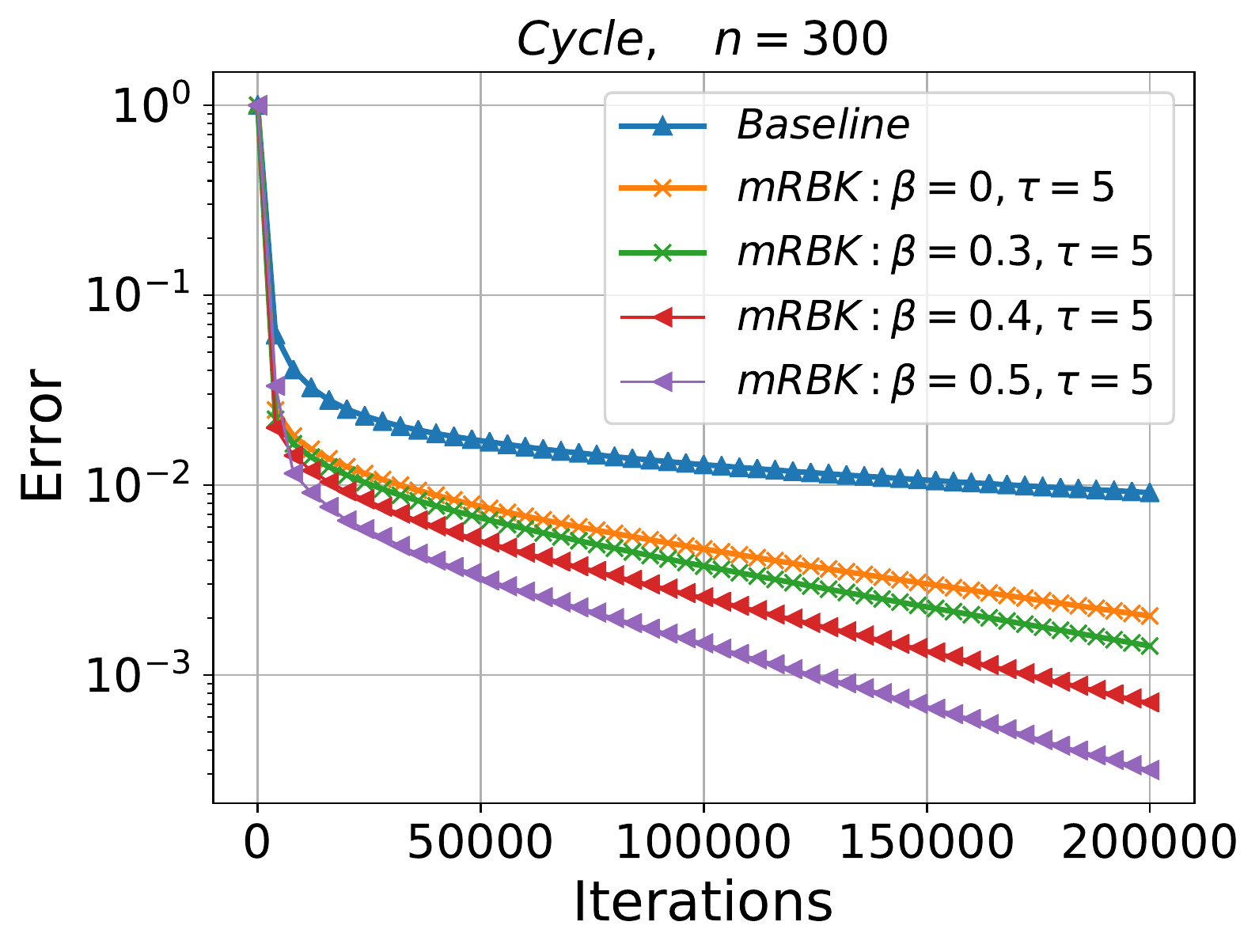}
\end{subfigure}%
\begin{subfigure}{.25\textwidth}
  \centering
  \includegraphics[width=1\linewidth]{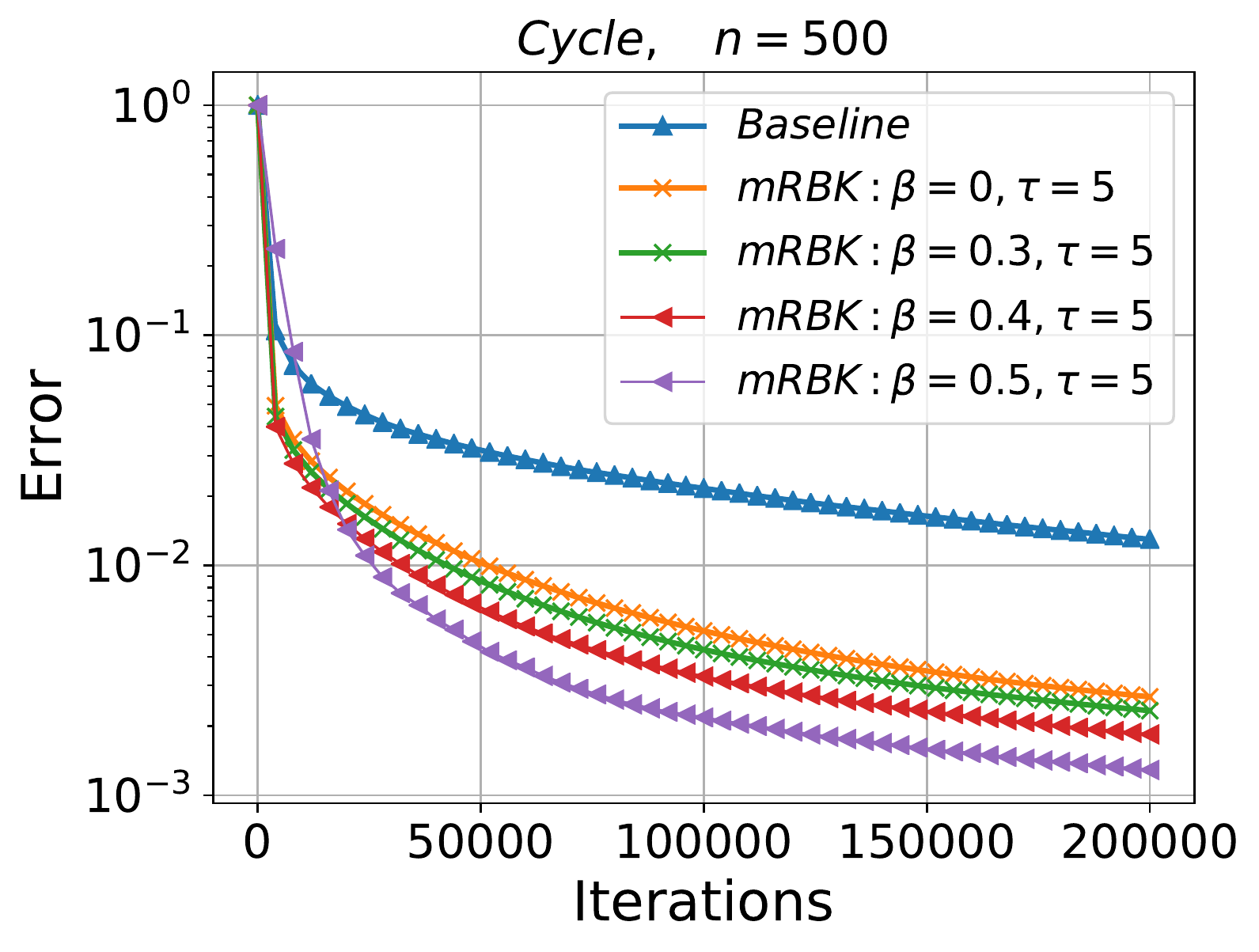}
  \label{fig:sub2}
\end{subfigure}
\caption{\footnotesize Comparison of mRBK with its no momentum variant RBK ($\beta=0$) proposed in \cite{LoizouRichtarik}.  The stepsize for all methods is $\omega=1$ and the block size is $\tau=5$. The baseline method in the plots denotes the simple randomized pairwise gossip algorithm (block $\tau=1$) and is plotted to highlight the benefits of having larger block sizes. The $n$ in the title of each plot indicates the number of nodes. For the grid graph this is $n \times n$.}
\label{RBKfigures}
\end{figure}

 \section{Conclusion and Future research}
 \label{conclusion}
In this paper we present new accelerated randomized gossip algorithms using tools from numerical linear algebra and the area of randomized Kaczmarz methods for solving linear systems. In particular, using recently developed results on the stochastic reformulation of consistent linear systems we explain how stochastic heavy ball method for solving a specific quadratic stochastic optimization problem can be interpreted as gossip algorithm. To the best of our knowledge, it is the first time that such protocols are presented for average consensus problem.  We believe that this work opens up many possible future venues for research. For example, using other Kaczmarz-type methods to solve particular linear systems we can obtain novel distributed protocols for average consensus.  In addition, we believe that the gossip protocols presented in this work can be extended to the more general setting of distributed optimization where the goal is to minimize the average of convex functions $(1/n) \sum_{i=1}^n f_i(x)$ in a distributed fashion. 

\bibliographystyle{plain}
\bibliography{SHBGossip}


\end{document}